\newtheorem{theorem}{Theorem}
\newtheorem{proposition}{Proposition}
\newtheorem{lemma}{Lemma}
\newtheorem{corollary}{Corollary}
\theoremstyle{definition}
\theoremstyle{remark}
\newtheorem{remark}{Remark}
\begin{document}

\title[Rotation invariant ultradistributions] {Rotation invariant ultradistributions}

\begin{abstract} We prove that an ultradistribution is rotation invariant if and only if it coincides with its spherical mean. For it, we study the problem of spherical representations of ultradistributions on $\mathbb{R}^{n}$. Our results apply to both the quasianalytic and the non-quasianalytic case.
\end{abstract}

\author[\DJ. Vu\v{c}kovi\'{c}]{\DJ or\dj e Vu\v{c}kovi\'{c}}
\address{Department of Mathematics, Ghent University, Krijgslaan 281 Gebouw S22, 9000 Gent, Belgium}
\email{dordev@cage.UGent.be}

\author[J. Vindas]{Jasson Vindas}
\thanks{The authors gratefully acknowledge support by Ghent University, through the BOF-grant 01N01014.}
\subjclass[2010]{Primary 46F05, 46F15. Secondary 42B99, 46F10.}
\keywords{Rotation invariant; spherical means; ultradistributions; hyperfunctions; spherical representations; spherical harmonics}

\address{Department of Mathematics, Ghent University, Krijgslaan 281 Gebouw S22, 9000 Gent, Belgium}
\email{jasson.vindas@UGent.be}

\maketitle

\section{Introduction}
Rotation invariant generalized functions have been studied by several authors, see e.g. \cite{Chung,T1960,Vernaeve2008}. The 
problem of the characterization  of rotation invariant ultradistributions and hyperfunctions was considered by Chung and Na in  \cite{Chung}. They showed there that a non-quasianalytic  ultradistribution or a hyperfunction is rotation invariant if and only if it is equal to its spherical mean. For continuous functions this result is clear, as a rotation invariant function must be radial and its spherical mean is given by
$$\varphi_S(x)=\frac{1}{|\mathbb{S}^{n-1}|}\int_{\mathbb S^{n-1}} \varphi(|x|\omega)d\omega.$$

The approach of Chung and Na to the problem consists in reducing the case of rotation invariant generalized functions to that of ordinary functions. For ultradistributions, non-quasianalyticity was a crucial assumption for their method since they regularized by convolving with a net of compactly supported ultradifferentiable mollifiers. In the hyperfunction case they applied a similar idea but this time based on Matsuzawa's heat kernel method.    

The aim of this article is to show that the characterization of rotation invariant ultradistributions in terms of their spherical means remains valid for quasianalytic ultradistributions. Our approach differs from that of Chang and Na, and we also recover their results for non-quasianalytic ultradistributions and hyperfunctions. 

Our method is based upon the study of spherical representations of ultradistributions, that is, the problem of representing an ultradistribution $f$ on $\mathbb{R}^{n}$ by an ultradistribution $g$ on $\mathbb{R}\times \mathbb{S}^{n-1}$ in such a way that $\langle f(x),\varphi(x)=\langle g(r,\omega),\varphi(r\omega)\rangle$. Spherical representations of distributions were studied by Drozhzhinov and Zav'yalov in \cite{DZ2006}. We shall also exploit results on spherical harmonic expansions of ultradifferentiable functions and ultradistributions on the unit sphere $\mathbb{S}^{n-1}$, recently obtained by us in \cite{VV2}. We mention that the theory of spherical harmonic expansions of distributions was developed by Estrada and Kanwal in \cite{Estrada}. 

The plan of the article is as follows. Section \ref{preli} discusses some background material on spherical harmonics and ultradistributions. Spherical representations of ultradistributions are studied in Section \ref{section spherical representation}. We show in Section \ref{section rotation invariant} that any ultradistribution is rotation invariant if and only if it coincides with its spherical mean. In the quasianalytic case we go beyond quasianalytic functionals by employing sheaves of quasianalytic ultradistributions.

\section{Preliminaries and Auxiliary Results}\label{preli}
In this section we collect some useful concepts and auxiliary results that will play a role in our study of rotation invariant ultradistributions and spherical representations.
\subsection{Spherical Harmonics} The theory of spherical harmonics is a classical subject in analysis and it is very well explained in several textbooks (see e.g. \cite[Chap.~5]{Axler}). A spherical harmonic of degree $j$ is simply the restriction to the Euclidean unit sphere $\mathbb{S}^{n-1}$ of a harmonic homogeneous polynomial of degree $j$ on $\mathbb{R}^{n}$. Let ${\mathcal H}_j(\mathbb{S}^{n-1})$ be the space of spherical harmonics of degree $j$. The dimension $d_{j}$ of ${\mathcal H}_j(\mathbb{S}^{n-1})$ can be explicitly calculated \cite[Prop.~5.8]{Axler}; although we will not make use of the explicit value, we need the growth estimate $d_{j}\asymp j^{n-2}$. We also point out that each $\mathcal{H}_{j}(\mathbb{S}^{n-1})$ is invariant under the action of the orthogonal group $O(n)$.

It is well-known \cite{Axler} that
$$
L^{2}(\mathbb{S}^{n-1})=\bigoplus_{j=0}^{\infty} \mathcal{H}_{j}(\mathbb{S}^{n-1}),
$$
where the $L^2$-inner product is taken with respect to the surface measure of $\mathbb{S}^{n-1}$.

Through the rest of the article we fix an orthonormal basis $\{Y_{k,j}\}_{k=1}^{d_{j}}$ of each  ${\mathcal H}_j(\mathbb{S}^{n-1})$, consisting of real-valued spherical harmonics. Hence, every function $f\in L^2(\mathbb{S}^{n-1})$ can be expanded as 
$$f(\omega)=\sum_{j=0}^{\infty}\sum_{k=1}^{d_{j}}c_{k,j}Y_{k,j}(\omega)$$
with convergence in $L^2(\mathbb{S}^{n-1})$.

\subsection{Ultradistributions} We briefly review in the subsection some properties of the spaces of ultradifferentiable functions and ultradistributions \cite{PilipovicK,Komatsu,Komatsu2}.

We fix a positive sequence $(M_p)_{p \in \mathbb{N}}$ with $M_{0}=1$. We will make use of some of the following standard conditions on the weight sequence
\begin{itemize}
\item [$(M.0)\:$] $p!\subset M_p$ in the Roumieu case, or $p!\prec M_p$ in the Beurling case.
\item [$(M.1)\:$] $M^{2}_{p}\leq M_{p-1}M_{p+1},$  $p\geq 1$.
\item [$(M.2)'$]$M_{p+1}\leq A H^p M_p$, $p\in\mathbb{N}$, for some $A,H>0$.
\item [$(M.2)\:$] $ M_{p}\leq A H^p\min_{1\leq q\leq p} \{M_{q} M_{p-q}\},$ $p\in\mathbb{N}$, for some $A,H>0$.
\item [$(M.3)'$]  $ \sum_{p=1}^{\infty}M_{p-1}/M_p<\infty$. 
\item [$(QA)$\:] $ \sum_{p=1}^{\infty}M_{p-1}/M_p=\infty$. 
\end{itemize}

We refer to \cite{Komatsu} for a detailed explanation of the meaning of all these conditions. The relations $\subset$ and $\prec$ used in $(M.0)$ are defined as follows. One writes $N_p\subset M_p$ ($N_p\prec M_p$) if there are $C,\ell>0$ (for each $\ell$ there is $C=C_{\ell}$) such that 
$N_p\leq C\ell^{p}M_{p},$ $p\in\mathbb{N}$. If $(M.3)'$ holds, we call $M_p$ non-quasianalytic; otherwise it is said to be quasianalytic. The associated function of the sequence is defined as
$$M(t)=\sup_{p\in\mathbb N}\log\left(\frac{t^p }{M_p}\right), \quad t> 0.$$
In the particular case of Gevrey sequences $M_p=(p!)^{s}$, the associated function is $M(t)\asymp t^{1/s}$ \cite{GS}.

Let $\Omega \subseteq \mathbb{R}^d$ be open. The space of all $C^{\infty}$-functions on $\Omega$ is denoted by $\mathcal{E}(\Omega)$. For $K \Subset \Omega$ (a compact subset with non-empty interior) and $h > 0$, one writes $\mathcal{E}^{\{M_p\},h}(K)$ for the space of all $\varphi \in \mathcal{E}(\Omega)$ such that
\[ \| \varphi \|_{\mathcal{E}^{\{M_p\},h}(K)} := \sup_{\substack{x \in K \\ \alpha \in \mathbb{N}^n}} \frac{|\varphi^{(\alpha)}(x)|}{h^{|\alpha|}M_{|\alpha|}} < \infty, \]
and $\mathcal{D}^{\{M_p\},h}_{K}$ stands for the closed subspace of $\mathcal{E}^{\{M_p\},h}(K)$ consisting of functions with compact support in $K$ (by the Denjoy-Carleman theorem, its non-triviality is equivalent to $(M.3)'$). Set then

\[  \mathcal{E}^{\{M_p\}}(\Omega) = \varprojlim_{K \Subset \Omega} \varinjlim_{h \rightarrow \infty} \mathcal{E}^{\{M_p\},h}(K), \qquad \mathcal{E}^{(M_p)}(\Omega) = \varprojlim_{K \Subset \Omega} \varprojlim_{h \rightarrow 0^+} \mathcal{E}^{\{M_p\},h}(K),   \]
and
\[ \mathcal{D}^{\{M_p\}}(\Omega) = \varinjlim_{K \Subset \Omega} \varinjlim_{h \rightarrow \infty} \mathcal{D}^{\{M_p\},h}_{K} \quad \mathcal{D}^{(M_p)}(\Omega) = \varinjlim_{K \Subset \Omega} \varprojlim_{h \rightarrow 0^+} \mathcal{D}^{\{M_p\},h}_{K}. \]
Their duals are the spaces of ultradistributions of Roumieu and Beurling type \cite{Komatsu}. 

In order to treat these spaces simultaneously we write $\ast=\{M_p\},(M_p)$. In statements needing a separate treatment we will first state assertions for the Roumieu case, followed by the Beurling one in parenthesis. 

In the important case $\ast=\{p!\}$, we write $\mathcal{A}(\Omega)={\mathcal{E}}^{\{p!\}}(\Omega)$, the space of real analytic functions on $\Omega$; its dual $\mathcal{A}'(\Omega)$ is then the space of analytic functionals on $\Omega$. Note that $(M.0)$ implies that $\mathcal{A}(\Omega)\subseteq \mathcal{E}^{\ast}(\Omega)$, and, if in addition $(M.1)$ and $(M.2)'$ hold, $\mathcal{A}(\Omega)$ is densely injected into $\mathcal{E}^{\ast}(\Omega)$ because the polynomials are dense in both spaces; in particular, ${\mathcal{E}^{\ast}}'(\Omega)\subseteq \mathcal{A}'(\Omega)$ under these assumptions.

If one assumes $(M.0)$ (as we will always do), the pullback of an invertible analytic change of variables $\Omega\to U$ becomes a TVS isomorphism between $\mathcal{E}^{\ast}(U)$ and $\mathcal{E}^{\ast}(\Omega)$ \cite[Prop.~8.4.1]{Hormander}. Therefore, one can always define the spaces $\mathcal{E}^{\ast}(M)$ and ${\mathcal{E}^{\ast}}'(M)$ for $\sigma$-locally compact analytic manifolds $M$ via charts if $(M.0)$ holds. Note that $(M.0)$ is automatically fulfilled if $(M.1)$ and $(M.3)'$ hold \cite[Lemma~4.1]{Komatsu}.

\subsection{Ultradistributions on $\mathbb{S}^{n-1}$ and Spherical Harmonics}
 \label{section ultradistributions and spherical harmonics}
The spaces of ultradifferentiable functions and ultradistributions on $\mathbb{S}^{n-1}$ can be described in terms of spherical harmonic expansions. A proof of the following theorem will appear in our forthcoming paper \cite{VV2}, which also deals with ultradistributional boundary values of harmonic functions on the sphere. We point out that the distribution case goes back to Estrada and Kanwal \cite{Estrada}. See also the forthcoming article \cite{dasgupta-ruzhansky2016} for a treatment of the problem on compact analytic manifolds.  We will apply Theorem \ref{t1 sh} in the next subsection to expand ultradifferentiable functions and ultradistributions on $\mathbb{R}\times \mathbb{S}^{n-1}$ in spherical harmonic series.

\begin{theorem}[\cite{VV2}] 
\label{t1 sh}
Suppose that $M_p$ satisfies $(M.0)$, $(M.1)$, and $(M.2)'$.
\begin{itemize}
\item [$(i)$] Let  $\varphi\in L^{2}({\mathbb S}^{n-1})$ have spherical harmonic expansion
\begin{equation}
\label{sheq1}
\varphi(\omega)=\sum_{j=0}^{\infty}\sum_{k=1}^{d_j}a_{k,j}Y_{k,j}(\omega).
\end{equation}
Then, $\varphi\in\mathcal{E}^{\ast}(\mathbb{S}^{n-1})$ if and only if the estimate
\begin{equation}
\label{sheq2}
\sup_{k,j}|a_{k,j}| e^{M\left(\frac{j}{h}\right)}<\infty
\end{equation}
holds for some $h>0$ (for all $h>0$).
\item [$(ii)$] Every ultradistribution $f\in{\mathcal{E}^{\ast}}'(\mathbb{S}^{n-1})$ admits a spherical harmonic expansion
\begin{equation}
\label{sheq3}
f(\omega)=\sum_{j=0}^{\infty}\sum_{k=1}^{d_j}c_{k,j}Y_{k,j}(\omega),
\end{equation}
where the coefficients satisfy the estimate
\begin{equation}
\label{sheq4}
\sup_{k,j}|c_{k,j}| e^{-M\left(\frac{j}{h}\right)}<\infty
\end{equation}
for each $h>0$ (for some $h>0$). Conversely, any series $(\ref{sheq3})$ converges in ${\mathcal{E}^{\ast}}'(\mathbb{S}^{n-1})$ if the coefficients have the stated growth properties.
\end{itemize}
\end{theorem}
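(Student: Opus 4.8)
The natural tool is the Laplace--Beltrami operator $\Delta=\Delta_{\mathbb{S}^{n-1}}$, for which the spaces $\mathcal{H}_j(\mathbb{S}^{n-1})$ are exactly the eigenspaces: $\Delta Y=-\lambda_j Y$ for $Y\in\mathcal{H}_j$, with $\lambda_j=j(j+n-2)\asymp j^2$. Writing $\langle j\rangle:=(1+\lambda_j)^{1/2}\asymp j$ and using that $\{Y_{k,j}\}$ is orthonormal, Parseval gives $\|\Delta^m\varphi\|_{L^2}^2=\sum_{j,k}\lambda_j^{2m}|a_{k,j}|^2$ for the $L^2$-expansion \eqref{sheq1}. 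The plan is to prove $(i)$ by transferring between the growth of $(a_{k,j})$ and the growth of the iterates $\|\Delta^m\varphi\|_{L^2}$ (equivalently $\|(1-\Delta)^{m/2}\varphi\|_{L^2}$), and then to obtain $(ii)$ from $(i)$ by a K\"othe-type sequence-space duality.

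For the forward implication of $(i)$, suppose $\varphi\in\mathcal{E}^{\ast}(\mathbb{S}^{n-1})$. In analytic charts $\Delta^m$ is a differential operator of order $2m$ with analytic coefficients, so the defining estimates of $\mathcal{E}^{\ast}$ together with $(M.0)$ and $(M.2)'$ yield $\|\Delta^m\varphi\|_{L^2}\le C h^{2m}M_{2m}$ for some $h$ (for every $h$). By Parseval each coefficient satisfies $\lambda_j^m|a_{k,j}|\le Ch^{2m}M_{2m}$; taking the infimum over $m$ and using the identity $e^{-M(t)}=\inf_p M_p/t^p$ together with $(M.2)'$ to pass from even indices to all indices converts this into $|a_{k,j}|\le C'e^{-M(\langle j\rangle/h')}\le C'e^{-M(j/h'')}$, which is \eqref{sheq2}.

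Conversely, assuming \eqref{sheq2}, I would estimate, for the first-order operator $\Lambda:=(1-\Delta)^{1/2}$,
\[
\|\Lambda^m\varphi\|_{L^2}^2=\sum_{j,k}\langle j\rangle^{2m}|a_{k,j}|^2\le C\sum_{j}d_j\,\langle j\rangle^{2m}e^{-2M(j/h)}.
\]
Writing $e^{-2M(j/h)}=e^{-M(j/h)}e^{-M(j/h)}$ and bounding the two factors by $M_m h^m j^{-m}$ and $M_{m+n}h^{m+n}j^{-(m+n)}$ respectively (both instances of $e^{-M(t)}\le M_p/t^p$), one gets $\langle j\rangle^{2m}e^{-2M(j/h)}\lesssim M_m M_{m+n}h^{2m+n}j^{-n}$, so the gain of the extra factor $j^{-n}$ makes $\sum_j d_j\,(\cdots)$ convergent since $d_j\asymp j^{n-2}$; finally $(M.2)'$ gives $M_{m+n}\le C_n (H^n)^m M_m$, leading to $\|\Lambda^m\varphi\|_{L^2}\le C''\widetilde h^{\,m}M_m$. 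By the ultradifferentiable elliptic regularity (iterates) theorem for the positive elliptic operator $1-\Delta$ on the compact analytic manifold $\mathbb{S}^{n-1}$, such a bound on the iterates characterizes membership in $\mathcal{E}^{\ast}(\mathbb{S}^{n-1})$, so $\varphi\in\mathcal{E}^{\ast}(\mathbb{S}^{n-1})$; the same estimates show that the partial sums of \eqref{sheq1} converge to $\varphi$ in the topology of $\mathcal{E}^{\ast}(\mathbb{S}^{n-1})$. This reverse implication, which rests on the iterates theorem, is the main obstacle; the remaining work is the associated-function bookkeeping above, carried out under $(M.2)'$ alone.

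Part $(ii)$ then follows by duality. Setting $c_{k,j}:=\langle f, Y_{k,j}\rangle$ (meaningful because $Y_{k,j}\in\mathcal{A}(\mathbb{S}^{n-1})\subseteq\mathcal{E}^{\ast}(\mathbb{S}^{n-1})$), the continuity of $f$ gives $|\langle f,\psi\rangle|\le C\|\psi\|$ in some seminorm of $\mathcal{E}^{\ast}(\mathbb{S}^{n-1})$; applied to $\psi=Y_{k,j}$ and combined with the quantitative version of $(i)$ for a single harmonic — whose seminorm is comparable to $e^{M(j/h)}$ — this yields \eqref{sheq4} with the quantifiers appropriate to the Roumieu (resp.\ Beurling) dual. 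For the converse, given coefficients satisfying \eqref{sheq4} and any $\psi=\sum b_{k,j}Y_{k,j}\in\mathcal{E}^{\ast}(\mathbb{S}^{n-1})$, part $(i)$ provides the decay $|b_{k,j}|\le C_\psi e^{-M(j/h')}$; choosing the free parameters so that $M(j/h')-M(j/h)$ eventually dominates $\log d_j$ makes $\sum_{j,k}|c_{k,j}b_{k,j}|$ convergent, so $\langle f,\psi\rangle:=\sum_{j,k}c_{k,j}b_{k,j}$ defines a continuous functional, i.e.\ the series \eqref{sheq3} converges in ${\mathcal{E}^{\ast}}'(\mathbb{S}^{n-1})$. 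Throughout, the Roumieu/Beurling dichotomy is only the bookkeeping of the quantifiers on $h$, and the polynomial factor $d_j\asymp j^{n-2}$ is always absorbed by an arbitrarily small change of $h$ since $M(t)/\log t\to\infty$.
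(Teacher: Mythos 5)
First, a point of order: the paper itself contains no proof of Theorem~\ref{t1 sh}; it is imported verbatim from the forthcoming reference \cite{VV2}, so there is no in-paper argument to measure yours against. Judged on its own terms, your proposal takes the eigenfunction-expansion route through the Laplace--Beltrami operator (essentially the strategy of the cited article \cite{dasgupta-ruzhansky2016}), and most of its components are sound: the Parseval transfer between coefficient decay and bounds on the iterates $\|\Delta^m\varphi\|_{L^2}$, the passage from even to all indices via $(M.1)$ and $(M.2)'$, the absorption of $d_j\asymp j^{n-2}$ using $e^{M(t)}\ge (t/A)\,e^{M(t/H)}$, and the duality argument for part $(ii)$ are all correct bookkeeping.

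The genuine gap is the one sentence carrying all the weight: ``by the ultradifferentiable elliptic regularity (iterates) theorem \ldots such a bound on the iterates characterizes membership in $\mathcal{E}^{\ast}(\mathbb{S}^{n-1})$.'' The implication from $\|(1-\Delta)^m\varphi\|_{L^2}\le C h^{2m}M_{2m}$ to $\varphi\in\mathcal{E}^{\ast}(\mathbb{S}^{n-1})$ \emph{is} the hard direction of part $(i)$: Parseval converts the coefficient condition \eqref{sheq2} into the iterates condition essentially for free, so you have reduced the theorem to an unproved statement of equivalent depth. It cannot be waved through, because the naive route (the spectral identity $\|\varphi\|_{H^{2m}}\asymp\|(1-\Delta)^m\varphi\|_{L^2}$ followed by Sobolev embedding) produces $m$-dependent constants that destroy the estimate; controlling those constants is precisely the content of Kotake--Narasimhan-type theorems. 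The classical versions (Kotake--Narasimhan, Komatsu) cover only the analytic class, the standard extensions (Bolley--Camus, Zanghirati, et al.) assume non-quasianalyticity, and the one reference in roughly the required generality for compact manifolds, \cite{dasgupta-ruzhansky2016}, works under hypotheses stronger than the $(M.0)$, $(M.1)$, $(M.2)'$ assumed here. You must therefore either prove the iterates theorem on $\mathbb{S}^{n-1}$ under the stated hypotheses or argue directly; a more elementary path, closer to the toolkit this paper itself deploys in Lemma~\ref{srl4}, is to bound the chart derivatives of $Y_{k,j}$ by $C^{|\alpha|}j^{|\alpha|+(n-2)/2}\|Y_{k,j}\|_{L^2}$ via Komatsu's quantitative analytic-iterates inequality applied to the harmonic extensions $P_{k,j}$, and then sum the series termwise using $j^{|\alpha|}e^{-M(j/h)}\le h^{|\alpha|}M_{|\alpha|}$.
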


It is important to point out that Theorem \ref{t1 sh} as stated above does not reveal all topological information encoded by the spherical harmonic coefficients. Denote as $\mathcal{E}^{\{M_p\},h}_{sh}(\mathbb{S}^{n-1})$ the Banach space of all (necessarily smooth) functions $\varphi$ on $\mathbb{S}^{n-1}$ having spherical harmonic expansion with coefficients $a_{k,j}$ satisfying (\ref{sheq2}) for a given $h$. One can then show \cite{VV2}
\[ \mathcal{E}^{\{M_p\}}(\mathbb{S}^{n-1}) =  \varinjlim_{h \rightarrow \infty} \mathcal{E}^{\{M_p\},h}_{sh}(\mathbb{S}^{n-1}) \quad \mbox{ and } \quad \mathcal{E}^{(M_p)}(\mathbb{S}^{n-1})= \varprojlim_{h \rightarrow 0^+} \mathcal{E}^{\{M_p\},h}_{sh}(\mathbb{S}^{n-1})  \]
topologically. This for instance yields immediately the nuclearity of $\mathcal{E}^{\ast}(\mathbb{S}^{n-1})$ under the assumptions of Theorem \ref{t1 sh}. Observe also that the norm on the Banach space $\mathcal{E}^{\{M_p\},h}_{sh}(\mathbb{S}^{n-1})$ can be rewritten as
\begin{equation}\label{sheq5}
\|\varphi \|_{\mathcal{E}^{\{M_p\},h}_{sh}(\mathbb{S}^{n-1})}= \sup_{k,j} e^{M\left(\frac{j}{h}\right)}\left|\int_{\mathbb{S}^{n-1}}\varphi(\omega)Y_{k,j}(\omega)d\omega\right|.
\end{equation}
A similar topological description can be given for the ultradistribution space ${\mathcal{E}^{\ast}}'(\mathbb{S}^{n-1})$ by using the coefficient estimates (\ref{sheq4}).

\subsection{Ultradistributions on $\mathbb{R}\times\mathbb{S}^{n-1}$} We also need some properties of the spaces $\mathcal{E}^{\ast}(\mathbb{R}\times\mathbb{S}^{n})$ and ${\mathcal{E}^{\ast}}'(\mathbb{R}\times\mathbb{S}^{n})$. Let us assume $(M.0)$, $(M.1)$, and $(M.2)$. We have
$$
\mathcal{E}^{\ast}(\mathbb{R}\times\mathbb{S}^{n-1})=\mathcal{E}^{\ast}(\mathbb{R}, \mathcal{E}^{\ast}(\mathbb{S}^{n-1}))=\mathcal{E}^{\ast}(\mathbb{S}^{n-1}, \mathcal{E}^{\ast}(\mathbb{R}))=\mathcal{E}^{\ast}(\mathbb{R})\widehat{\otimes}\mathcal{E}^{\ast}(\mathbb{S}^{n-1}),
$$
where the tensor product may be equally taken with respect to the $\pi$- or $\epsilon$-topology in view of the nuclearity of these spaces. In fact, the first two equalities are completely trivial, while the third one follows because the linear span of terms of the form $p\otimes Y$, where $p$ is a polynomial on $\mathbb{R}$ and $Y$ a spherical harmonic, is dense in  $\mathcal{E}^{\ast}(\mathbb{R}\times\mathbb{S}^{n-1})$. Moreover, this immediately gives (cf. (\ref{sheq4})) that 
\[  \mathcal{E}^{\{M_p\}}(\mathbb{R}\times\mathbb{S}^{n-1}) = \varprojlim_{K \Subset \mathbb{R}} \varinjlim_{h \rightarrow \infty} \mathcal{E}^{\{M_p\},h}_{sh}(K\times\mathbb{S}^{n-1}) 
\]
and
\[
\mathcal{E}^{(M_p)}(\mathbb{R}\times\mathbb{S}^{n-1}) = \varprojlim_{K \Subset \mathbb{R}} \varprojlim_{h \rightarrow 0^+} \mathcal{E}^{\{M_p\},h}_{sh}(K\times\mathbb{S}^{n-1}),   \]
where  $\mathcal{E}^{\{M_p\},h}_{sh}(K\times\mathbb{S}^{n-1})$ is the space of functions $\Phi$ such that
\begin{equation}
\label{sheq5}
\|\Phi \|_{\mathcal{E}^{\{M_p\},h}_{sh}(K\times\mathbb{S}^{n-1})}= \sup_{k,j} e^{M\left(\frac{j}{h}\right)}\left\|\int_{\mathbb{S}^{n-1}}\Phi(\:\cdot \:,\omega)Y_{k,j}(\omega)d\omega\right\|_{\mathcal{E}^{\{M_p\},h}(K)}<\infty.
\end{equation}

These comments yield the following proposition.
\begin{proposition}\label{p sh} Assume $M_p$ satisfies $(M.0)$, $(M.1)$, and $(M.2).$ 
\begin{itemize} 
\item [$(i)$] Every $\Phi\in \mathcal{E}^{\ast}(\mathbb{R}\times\mathbb{S}^{n-1})$ has convergent expansion
$$
\Phi(r,\omega)=\sum_{j=0}^{\infty}\sum_{k=1}^{d_j}a_{k,j}(r)Y_{k,j}(\omega) \quad \mbox{in }\mathcal{E}^{\ast}(\mathbb{R}\times\mathbb{S}^{n-1}),
$$
where $a_{k,j}\in\mathcal{E}^{\ast}(\mathbb{R})$ and for each $K\Subset\mathbb{R}$
\begin{equation}
\label{sheq6}
\sup_{k,j}e^{M\left(\frac{j}{h}\right)}\|a_{k,j}\|_{\mathcal{E}^{\{M_p\},h}(K)} <\infty
\end{equation}
for some $h>0$ $($for all $h>0$$)$. Conversely, any such series converges in the space $\mathcal{E}^{\ast}(\mathbb{R}\times\mathbb{S}^{n-1})$ if $(\ref{sheq6})$ holds.

\item [$(ii)$] Every ultradistribution $g\in{\mathcal{E}^{\ast}}'(\mathbb{R}\times\mathbb{S}^{n-1})$ has convergent expansion 
$$
g(r,\omega)=\sum_{j=0}^{\infty}\sum_{k=1}^{d_j}c_{k,j}(r)\otimes Y_{k,j}(\omega) \quad \mbox{in }{\mathcal{E}^{\ast}}'(\mathbb{R}\times\mathbb{S}^{n-1}),
$$
where $c_{k,j}\in{\mathcal{E}^{\ast}}'(\mathbb{R})$ and for any bounded subset $B\subset \mathcal{E}^{\ast}(\mathbb{R})$ one has
\begin{equation}
\label{sheq7}
\sup_{k,j}\: e^{-M\left(\frac{j}{h}\right)}\sup_{\varphi\in B} |\langle c_{k,j},\varphi \rangle |<\infty.
\end{equation}
for each $h$ (for some $h$).
Conversely, any such series converges in the space ${\mathcal{E}^{\ast}}'(\mathbb{R}\times\mathbb{S}^{n-1})$ if $(\ref{sheq7})$ holds.
\end{itemize}
\end{proposition}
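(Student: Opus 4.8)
The plan is to read off both parts from the topological identities and the norm formula (\ref{sheq5}) recorded just above, combined with Theorem \ref{t1 sh} and the nuclearity of the spaces involved; no genuinely new analysis is needed beyond careful bookkeeping of the weight parameter $h$. For part $(i)$, I would set $a_{k,j}(r)=\int_{\mathbb{S}^{n-1}}\Phi(r,\omega)Y_{k,j}(\omega)\,d\omega$. Differentiation under the integral sign gives $a_{k,j}^{(m)}(r)=\int_{\mathbb{S}^{n-1}}(\partial_r^m\Phi)(r,\omega)Y_{k,j}(\omega)\,d\omega$, so the ultradifferentiable bounds on $\Phi$ in the $r$-variable transfer to $a_{k,j}$, whence $a_{k,j}\in\mathcal{E}^{\ast}(\mathbb{R})$. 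The coefficient estimate (\ref{sheq6}) is then precisely the statement $\|\Phi\|_{\mathcal{E}^{\{M_p\},h}_{sh}(K\times\mathbb{S}^{n-1})}<\infty$ read off from (\ref{sheq5}), and the membership of $\Phi$ in $\mathcal{E}^{\ast}(\mathbb{R}\times\mathbb{S}^{n-1})=\varprojlim_{K}\varinjlim_{h}\mathcal{E}^{\{M_p\},h}_{sh}(K\times\mathbb{S}^{n-1})$ (resp.\ $\varprojlim_{h}$ in the Beurling case) is exactly the requirement that (\ref{sheq6}) hold with the correct quantifier on $h$. Convergence of the partial sums to $\Phi$ follows by estimating the tails in a slightly larger (Roumieu) or the same (Beurling) $h$-norm, using that $t\mapsto M(t)$ is increasing and that $(M.2)'$ forces $M(\rho t)-M(t)\to\infty$ for $\rho>1$. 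The converse is immediate, since (\ref{sheq6}) is exactly the condition for the series to lie in the space.

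For part $(ii)$ I would define the coefficients directly by duality: for $\psi\in\mathcal{E}^{\ast}(\mathbb{R})$ put $\langle c_{k,j},\psi\rangle=\langle g,\psi\otimes Y_{k,j}\rangle$, which is meaningful because $\psi\otimes Y_{k,j}\in\mathcal{E}^{\ast}(\mathbb{R}\times\mathbb{S}^{n-1})$, so $c_{k,j}\in{\mathcal{E}^{\ast}}'(\mathbb{R})$. The crux is (\ref{sheq7}). By orthonormality of the $Y_{k,j}$ and the norm formula (\ref{sheq5}) one computes the clean identity $\|\psi\otimes Y_{k,j}\|_{\mathcal{E}^{\{M_p\},h}_{sh}(K\times\mathbb{S}^{n-1})}=e^{M(j/h)}\|\psi\|_{\mathcal{E}^{\{M_p\},h}(K)}$. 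Since $g$ has compact support contained in some $K\times\mathbb{S}^{n-1}$, continuity of $g$ yields for each $h$ a constant $C_h$ with $|\langle g,\Phi\rangle|\le C_h\|\Phi\|_{\mathcal{E}^{\{M_p\},h}_{sh}(K\times\mathbb{S}^{n-1})}$ on the relevant Banach step; applied to $\Phi=\psi\otimes Y_{k,j}$ this gives $e^{-M(j/h)}|\langle c_{k,j},\psi\rangle|\le C_h\|\psi\|_{\mathcal{E}^{\{M_p\},h}(K)}$. Taking the supremum over a bounded set $B\subset\mathcal{E}^{\ast}(\mathbb{R})$, which by regularity lives boundedly in a single step $\mathcal{E}^{\{M_p\},h_0}(K)$, and invoking the joint monotonicity of $h\mapsto\|\cdot\|_{\mathcal{E}^{\{M_p\},h}(K)}$ and of $M$ to handle the remaining values of $h$, delivers (\ref{sheq7}) with the correct quantifier.

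For the converse in $(ii)$ I would pair a candidate series against a test function $\Phi=\sum_{k,j}a_{k,j}Y_{k,j}\in\mathcal{E}^{\ast}(\mathbb{R}\times\mathbb{S}^{n-1})$ to get $\langle g,\Phi\rangle=\sum_{k,j}\langle c_{k,j},a_{k,j}\rangle$, and bound the right-hand side by pitting the growth (\ref{sheq7}) of the $c_{k,j}$ against the decay (\ref{sheq6}) of the $a_{k,j}$. Concretely, $\{e^{M(j/h)}a_{k,j}\}_{k,j}$ is a bounded subset of $\mathcal{E}^{\ast}(\mathbb{R})$, so (\ref{sheq7}) gives $|\langle c_{k,j},a_{k,j}\rangle|\lesssim e^{M(j/h')-M(j/h)}$; choosing the free parameter $h'$ on the $g$-side larger than the $h$ on the $\Phi$-side — this is exactly where the complementary Roumieu/Beurling quantifiers are used — makes $M(j/h)-M(j/h')\to\infty$ superlogarithmically (from $M(t)\ge k\log t-\log M_k$ together with $(M.2)'$), which dominates the polynomial factor $d_{j}\asymp j^{n-2}$ and renders the double series absolutely convergent.

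I expect the bookkeeping of the $h$-quantifiers in part $(ii)$ to be the main obstacle: one must check that the single continuity constant of $g$ on each Banach step, combined with the step in which a prescribed bounded set $B$ sits, can be matched so that (\ref{sheq7}) holds for \emph{every} $h$ in the Roumieu case and for \emph{some} $h$ in the Beurling case, and that the \emph{same} matching makes the double series of the converse summable against $d_{j}$. Everything else is a transcription of the topological descriptions preceding the statement.
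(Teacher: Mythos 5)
Your part $(i)$ is essentially the paper's argument verbatim: define $a_{k,j}(r)=\int_{\mathbb{S}^{n-1}}\Phi(r,\omega)Y_{k,j}(\omega)\,d\omega$, observe that (\ref{sheq6}) is just a restatement of the norm (\ref{sheq5}), and check convergence of the tails through those seminorms. Where you diverge is part $(ii)$: the paper disposes of it in one line by invoking the canonical identification ${\mathcal{E}^{\ast}}'(\mathbb{R}\times\mathbb{S}^{n-1})={\mathcal{E}^{\ast}}'(\mathbb{S}^{n-1},{\mathcal{E}^{\ast}}'(\mathbb{R}))=L_{b}(\mathcal{E}^{\ast}(\mathbb{S}^{n-1}),{\mathcal{E}^{\ast}}'(\mathbb{R}))$ (a kernel-theorem-type consequence of nuclearity) and then dualizing $(i)$, whereas you carry out the duality by hand: you define $\langle c_{k,j},\psi\rangle=\langle g,\psi\otimes Y_{k,j}\rangle$, compute $\|\psi\otimes Y_{k,j}\|_{\mathcal{E}^{\{M_p\},h}_{sh}(K\times\mathbb{S}^{n-1})}=e^{M(j/h)}\|\psi\|_{\mathcal{E}^{\{M_p\},h}(K)}$ from orthonormality, extract (\ref{sheq7}) from the continuity of $g$ on the Banach steps together with regularity of the inductive limit (to place a bounded set $B$ in a single step), and prove the converse by pitting (\ref{sheq7}) against (\ref{sheq6}) and using $2M(t)\le M(Ht)+\log(AM_0)$ (a consequence of $(M.2)$, which is exactly why the proposition assumes it rather than only $(M.2)'$) to beat the polynomial factor $d_j\asymp j^{n-2}$. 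Both routes are sound. Your version is more elementary and self-contained --- it does not presuppose the vector-valued identification, and it makes the quantifier bookkeeping on $h$ explicit, which the paper leaves entirely implicit --- at the cost of length; the paper's version is slicker but pushes all the work into the cited identification. The only place where you should be a bit more careful is the converse of $(ii)$ in the Roumieu case: the parameter $h$ in (\ref{sheq6}) may a priori depend on the compact $K$, so the family $\{e^{M(j/h)}a_{k,j}\}_{k,j}$ must be checked to be bounded in $\mathcal{E}^{\{M_p\}}(\mathbb{R})$ as a single set (or one restricts to a fixed compact carrying the supports of the $c_{k,j}$); this is a minor regularity point, and the paper's own proof glosses over it entirely.
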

\begin{proof} For $(i)$, simply note that $a_{k,j}(r)=\int_{\mathbb{S}^{n-1}}\Phi(r,\omega)Y_{k,j}(\omega)d\omega$ and so (\ref{sheq5}) is the same as (\ref{sheq6}). The convergence of the series expansions of $\Phi$ is trivial to check via the seminorms (\ref{sheq5}). Part $(ii)$ follows from $(i)$ and the canonical identification ${\mathcal{E}^{\ast}}'(\mathbb{R}\times\mathbb{S}^{n-1})={\mathcal{E}^{\ast}}'(\mathbb{S}^{n-1}, {\mathcal{E}^{\ast}}'(\mathbb{R}))(:=L_{b}(\mathcal{E}^{\ast}(\mathbb{S}^{n-1}),{\mathcal{E}^{\ast}}'(\mathbb{R})))$.
\end{proof}

Note that the same proposition holds for ${\mathcal{D}^{\ast}}'(\mathbb{R}\times\mathbb{S}^{n})$ if one additionally assumes $(M.3)'$.

\section{Spherical Representations of Ultradistributions}
\label{section spherical representation}

It is easy to see that any $g\in {\mathcal{E}^{\ast}}'(\mathbb{R}\times \mathbb{S}^{n-1})$ gives rise to an ultradistribution $f$ on $\mathbb{R}^{n}$ via the formula
\begin{equation}
\label{sreq1} 
\langle f(x),\varphi(x) \rangle = \langle g(r,\omega),\varphi(r\omega) \rangle.
\end{equation}
In fact, the assignment $g\mapsto f$ is simply the transpose of 
\begin{equation}
\label{sreq2} 
\varphi \mapsto \Phi, \quad \Phi(r,\omega):=\varphi(r\omega),
\end{equation}
which is obviously continuous $\mathcal E^{*}(\mathbb R^n)\to \mathcal{E}^{\ast}(\mathbb{R}\times \mathbb{S}^{n-1})$.

In this section we study the converse representation problem. That is, the problem of representing an $f\in{\mathcal E^{*}}'(\mathbb R^n)$ as in (\ref{sreq1}) for some ultradistribution $g$ on $\mathbb{R}\times\mathbb{S}^{n-1}$. We shall call any such $g$ a \emph{spherical representation} of $f$. Naturally, the same considerations make sense for $f\in{\mathcal D^{*}}'(\mathbb R^n)$ in the non-quasianalytic case. 

In order to fix ideas, let us first discuss the distribution case. The problem of finding a spherical representation of $f\in\mathcal{D}'(\mathbb{R}^{n})$ can be reduced to the determination of the image of $\mathcal{E}(\mathbb{R}^{n})$ under the mapping (\ref{sreq2}). Notice that the range of this mapping is obviously contained in the subspace of ``even'' test functions, namely,
$${\mathcal E}_e(\mathbb R\times \mathbb S^{n-1})=\{\Phi\in {\mathcal E}(\mathbb R\times\mathbb S^{n-1}):\: \Phi(-r,-\omega)=\Phi(r,\omega), \forall (r,\omega)\in\mathbb{R}\times\mathbb{S}^{n-1}\}.$$
In other words, one is interested here in characterizing all those $\Phi\in \mathcal{E}_{e}(\mathbb{R}\times \mathbb{S}^{n-1})$ such that 
\begin{equation}
\label{sreq3} 
\varphi(x)=\Phi\left(|x|, \frac{x}{|x|}\right)
\end{equation}
is a smooth function on $\mathbb{R}^{n}$. The solution to the latter problem is well-known:
\begin{proposition}[\cite{DZ2006,HelgasonBook}]
\label{srp1} Let $\Phi\in \mathcal{E}_{e}(\mathbb{R}\times \mathbb{S}^{n-1})$. Then, $\varphi$ given by $(\ref{sreq3})$ is an element of $\mathcal{E}(\mathbb{R}^{n})$  if and only if  $\Phi$ has the property that for each $m\in\mathbb{N}$
\begin{equation}
\label{sreq4}
\frac{\partial^m\Phi}{\partial r^m}(0,\omega) \ \ \mbox{is a homogeneous polynomial of degree } 
 m.
\end{equation}
\end{proposition}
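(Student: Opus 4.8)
The plan is to reduce the problem to the classical description of radial smooth functions on $\mathbb{R}^n$, where the parametrization $x \mapsto (|x|, x/|x|)$ is smooth away from the origin and the only subtlety is the behavior at $x = 0$. First I would dispose of the direction that is essentially a change of variables: away from the origin, the map $(r,\omega) \mapsto r\omega$ (restricted to $r>0$) is a real-analytic diffeomorphism onto $\mathbb{R}^n \setminus \{0\}$, so $\varphi$ is automatically $C^\infty$ on $\mathbb{R}^n \setminus \{0\}$ whenever $\Phi \in \mathcal{E}_e(\mathbb{R}\times\mathbb{S}^{n-1})$. The entire content of the statement is therefore the behavior of $\varphi$ in a neighborhood of the origin, and condition $(\ref{sreq4})$ must be exactly what guarantees $\varphi$ extends smoothly across $0$.

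For the necessity direction, I would assume $\varphi \in \mathcal{E}(\mathbb{R}^n)$ and Taylor-expand it at the origin. Since $\varphi(r\omega) = \Phi(r,\omega)$, differentiating $m$ times in $r$ and setting $r=0$ gives $\frac{\partial^m \Phi}{\partial r^m}(0,\omega)$ as the $m$-th radial derivative of $\varphi$ along the ray in direction $\omega$; by Taylor's theorem this equals $\sum_{|\alpha|=m} \binom{m}{\alpha} (\partial^\alpha \varphi)(0)\, \omega^\alpha$, which is manifestly a homogeneous polynomial of degree $m$ in $\omega$ (extended to $\mathbb{R}^n$). This direction is routine once the identity $\partial_r^m \Phi(0,\omega) = \sum_{|\alpha|=m}\binom{m}{\alpha}\partial^\alpha\varphi(0)\,\omega^\alpha$ is recorded.

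The sufficiency direction is the main obstacle, and it is where I would invest the most care. Assuming $(\ref{sreq4})$, I want to build a genuine $C^\infty$ function on $\mathbb{R}^n$ agreeing with $\Phi(|x|, x/|x|)$. The natural strategy is to use the evenness of $\Phi$ to write $\Phi(r,\omega)$ as a smooth function of $(r^2, r\omega)$, since $\varphi(x)$ should depend smoothly on $x$ and $r^2 = |x|^2$, $r\omega = x$ are the two natural even/covariant quantities. Concretely, I would expand $\Phi(r,\omega)$ in a Taylor series in $r$ with coefficients that are functions on $\mathbb{S}^{n-1}$; the evenness $\Phi(-r,-\omega) = \Phi(r,\omega)$ forces the $m$-th coefficient to be an \emph{even} or \emph{odd} function of $\omega$ according to the parity of $m$, and hypothesis $(\ref{sreq4})$ upgrades each coefficient to the restriction of a homogeneous polynomial $P_m$ of degree $m$. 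Then $\sum_m \frac{1}{m!} P_m(\omega) r^m = \sum_m \frac{1}{m!} P_m(r\omega) = \sum_m \frac{1}{m!}P_m(x)$ is formally the Taylor series of a smooth function of $x$. The delicate point is to pass from the formal matching of Taylor coefficients to an actual smooth function: I would combine a Borel-type argument (realizing the $P_m$ as the homogeneous Taylor parts of some $g \in \mathcal{E}(\mathbb{R}^n)$) with an estimate showing that $\varphi - g$ vanishes to infinite order at the origin and is smooth on $\mathbb{R}^n\setminus\{0\}$, hence smooth everywhere. Controlling the remainder uniformly in $\omega$—so that the flatness of $\varphi - g$ at $0$ is genuine and not merely directional—is the technical heart of the argument, and I expect this is precisely the step for which the cited references \cite{DZ2006, HelgasonBook} supply the clean classical machinery.
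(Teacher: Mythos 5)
Your argument is correct, but note that the paper itself gives no proof of Proposition \ref{srp1}: it is quoted from the references, and Remark \ref{srrk1} only indicates that the machinery of Section \ref{section spherical representation} yields a new proof. Your route is the classical one: the necessity direction via $\partial_r^m\Phi(0,\omega)=\sum_{|\alpha|=m}\frac{m!}{\alpha!}\partial^\alpha\varphi(0)\,\omega^\alpha$ is exactly right, and the sufficiency direction via Borel's theorem plus a flatness argument does close. The one step you flag but do not carry out --- uniform control of the remainder --- is genuinely the crux, and for completeness you should record why it works: setting $\Psi(r,\omega)=\Phi(r,\omega)-g(r\omega)$, every mixed derivative $\partial_r^k\partial_\omega^\beta\Psi$ has all its $r$-derivatives vanishing at $r=0$ (differentiate the identity $\partial_r^m\Psi(0,\cdot)=0$ in $\omega$), hence is $O(r^N)$ for every $N$ uniformly on $\mathbb{S}^{n-1}$ by Taylor's theorem; since the chain-rule factors coming from $x\mapsto(|x|,x/|x|)$ blow up only like a fixed negative power of $|x|$, every $\partial^\alpha(\varphi-g)$ tends to $0$ at the origin, and the standard extension lemma gives smoothness. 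By contrast, the proof the paper has in mind (cf.\ Lemmas \ref{srl1}--\ref{srl3} and Proposition \ref{p sh}) expands $\Phi$ in spherical harmonics, uses \eqref{sreq4} to divide each coefficient $a_{k,j}$ by $r^j$, substitutes $r\mapsto\sqrt{|r|}$ to absorb the parity, and reassembles $\varphi(x)=\sum_{j,k}b_{k,j}(|x|^2)P_{k,j}(x)$. Your approach is more elementary and self-contained; the paper's approach is heavier but is precisely what survives quantitatively in the ultradifferentiable classes, where a Borel-type argument is unavailable (indeed impossible in the quasianalytic case), which is why the authors develop it.
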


Write
$$
\mathcal{V}(\mathbb{R}\times \mathbb{S}^{n-1}):=\{\Phi\in {\mathcal E}_e(\mathbb R\times \mathbb S^{n-1}): \: (\ref{sreq4}) \ \mbox{holds for each }m\in\mathbb{N}\}.
$$
Hence $\mathcal{V}(\mathbb{R}\times \mathbb{S}^{n-1})$ is precisely the image of $\mathcal{E}(\mathbb{R}^{n})$ under (\ref{sreq2}). Since it is obviously a closed subspace of ${\mathcal E}(\mathbb R\times \mathbb S^{n-1})$, one obtains from the open mapping theorem that $\mathcal{E}(\mathbb{R}^{n})$ is isomorphic to  $\mathcal{V}(\mathbb{R}\times \mathbb{S}^{n-1})$ via 
(\ref{sreq2}). Given $f\in\mathcal{D}'(\mathbb{R}^{n})$, $\langle f(x),\Phi(|x|, x/|x|)\rangle$ defines a continuous linear functional on $\mathcal{D}(\mathbb{R}\times \mathbb{S}^{n-1})\cap\mathcal{V}(\mathbb{R}\times \mathbb{S}^{n-1})$, and, by applying the Hahn-Banach theorem, one establishes the existence of a spherical representation $g\in\mathcal{D}'(\mathbb{R}\times \mathbb{S}^{n-1})$ for $f$.

We now treat the ultradistribution case. We consider
$${\mathcal V}^{*}(\mathbb R\times \mathbb S^{n-1}):={\mathcal V}(\mathbb R\times \mathbb S^{n-1})\cap {\mathcal E}^{*}(\mathbb R\times \mathbb S^{n-1}),$$
a closed subspace of ${\mathcal E}^{*}(\mathbb R\times \mathbb S^{n-1})$. It is clear that (\ref{sreq2}) maps $\mathcal{E}^{\ast}(\mathbb{R}^{n})$ continuously into ${\mathcal V}^{*}(\mathbb R\times \mathbb{S}^{n-1})$, but whether this mapping is surjectieve or not is not evident. The next theorem gives a partial answer to this question, which allows one to consider spherical representations of ultradistributions. We associate the weight sequence 
$$
N_{p}=\sqrt{p!M_{p}}
$$
to $M_p$. Note that $N_{p}\subset M_{p}$ in the Roumieu case, while $N_{p}\prec M_{p}$ in the Beurling case. The symbol $\dagger$ stands for $\{N_p\}$ if $\ast=\{M_p\}$, while when $\ast=(M_p)$ we set $\dagger=(N_p)$.

\begin{theorem}
\label{srth1}
 Suppose that $M_p$ satisfies $(M.0)$, $(M.1)$, and $(M.2)$.
\begin{itemize}
\item [$(i)$] The linear mapping $\Phi\to \varphi$, where $\varphi$ is given by (\ref{sreq3}), maps continuously ${\mathcal V}^{\dagger}(\mathbb R\times \mathbb S^{n-1})$ into $\mathcal{E}^{\ast}(\mathbb{R}^{n})$. 
\item [$(ii)$] Any ultradistribution $f\in {\mathcal{E}^{\ast}}'(\mathbb{R}^{n})$ admits a spherical representation from ${\mathcal{E}^{\dagger}}'(\mathbb{R}\times\mathbb{S}^{n-1})$; more precisely, one can always find $g\in{\mathcal{E}^{\dagger}}'(\mathbb{R}\times\mathbb{S}^{n-1})$ such that $(\ref{sreq1})$ holds for all $\varphi\in\mathcal{E}^{\dagger}(\mathbb{R}^{n})$.
\end{itemize}
\end{theorem}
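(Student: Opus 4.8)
The plan is to establish the continuity statement $(i)$ first and then deduce $(ii)$ by a soft duality argument. For $(ii)$: let $S\colon\mathcal E^{\dagger}(\mathbb R^n)\to\mathcal V^{\dagger}(\mathbb R\times\mathbb S^{n-1})$ be the continuous map $\varphi\mapsto\Phi$, $\Phi(r,\omega)=\varphi(r\omega)$, of (\ref{sreq2}), and let $T\colon\mathcal V^{\dagger}(\mathbb R\times\mathbb S^{n-1})\to\mathcal E^{\ast}(\mathbb R^n)$ be the map of part $(i)$; note $T\circ S=\mathrm{id}$ and that $S$ indeed lands in $\mathcal V^{\dagger}$ by the easy direction of Proposition \ref{srp1}. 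Given $f\in{\mathcal E^{\ast}}'(\mathbb R^n)$, the composition $u:=f\circ T$ is a continuous linear functional on the closed subspace $\mathcal V^{\dagger}(\mathbb R\times\mathbb S^{n-1})$ of $\mathcal E^{\dagger}(\mathbb R\times\mathbb S^{n-1})$; by the Hahn--Banach theorem it extends to some $g\in{\mathcal E^{\dagger}}'(\mathbb R\times\mathbb S^{n-1})$. For any $\varphi\in\mathcal E^{\dagger}(\mathbb R^n)$ we then have $\Phi=S\varphi\in\mathcal V^{\dagger}$ and $T\Phi=\varphi$, whence $\langle g(r,\omega),\varphi(r\omega)\rangle=u(\Phi)=\langle f,T\Phi\rangle=\langle f,\varphi\rangle$, which is exactly (\ref{sreq1}). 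This reduces everything to $(i)$.

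For $(i)$ it suffices to bound the seminorms of $\varphi$ on balls $\overline{B(0,R)}$. Since $x\mapsto(|x|,x/|x|)$ is real-analytic away from the origin, the stability of $\mathcal E^{\ast}$ under composition with analytic maps (guaranteed by $(M.0),(M.1),(M.2)$) handles every region $\{|x|\ge\rho\}$, so the whole difficulty is a neighbourhood of $0$. There I would expand, using Proposition \ref{p sh}$(i)$ for the sequence $N_p$, $\Phi(r,\omega)=\sum_{j,k}a_{k,j}(r)Y_{k,j}(\omega)$ with $\sup_{k,j}e^{M_{N}(j/h)}\|a_{k,j}\|_{\mathcal E^{\{N_p\},h}}<\infty$, where $M_N$ denotes the associated function of $N_p$; the defining property (\ref{sreq4}) of $\mathcal V$ forces each $a_{k,j}$ to vanish to order $j$ and to have the parity of $j$, so that $a_{k,j}(r)=r^{j}b_{k,j}(r)$ with $b_{k,j}$ even and $\varphi(x)=\sum_{j,k}b_{k,j}(|x|)P_{k,j}(x)$, where $P_{k,j}(x)=|x|^{j}Y_{k,j}(x/|x|)$ is the associated solid harmonic.

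The estimation of a single term is modelled on the purely radial case. Writing $b_{k,j}(r)=\beta_{k,j}(r^{2})$ and applying the Faà di Bruno formula to $\beta_{k,j}(|x|^{2})$, the even-function reduction produces derivatives of $\beta_{k,j}$ at a \emph{doubled} index, and the identity $N_{p}^{2}=p!\,M_{p}$ together with $(M.2)$ in the forms $M_{2p}\le AH^{2p}M_{p}^{2}$ and $M_{p+q}\le AH^{p+q}M_{p}M_{q}$ collapses this doubled index back to the $M_{p}$-scale; this is precisely the mechanism by which the loss of regularity $N_p\rightsquigarrow M_p$ occurs and where $(M.2)$ is indispensable. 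For the polynomial factors I would use the homogeneity of $P_{k,j}$ (supplying a gain $|x|^{j-|\gamma|}$ near the origin) together with Bernstein-type bounds $\sup_{\mathbb S^{n-1}}|D^{\gamma}P_{k,j}|\lesssim C^{|\gamma|}j^{|\gamma|+(n-2)/2}$ and $d_j\asymp j^{n-2}$, so that the summation over $(k,j)$ is controlled by the coefficient decay $e^{-M_{N}(j/h)}$ of Theorem \ref{t1 sh}, collapsing to the desired $M_p$-seminorm bound on $\overline{B(0,R)}$.

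The hard part is the uniform control up to the origin, that is, producing bounds for $b_{k,j}$ that are \emph{simultaneously} summable in $j$ and of the correct $M_p$-order in the differentiation index. Naively rewriting $b_{k,j}=a_{k,j}/r^{j}$ through a single high derivative, via the integral remainder $b_{k,j}^{(p)}(r)=\tfrac1{(j-1)!}\int_0^1(1-t)^{j-1}t^{p}a_{k,j}^{(j+p)}(tr)\,dt$, yields only $|b_{k,j}^{(p)}|\lesssim\frac{p!}{(j+p)!}\sup|a_{k,j}^{(j+p)}|$, whose weight $p!\sqrt{M_{j+p}/(j+p)!}$ grows with $j$ and destroys summability. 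The crux is therefore to transfer the order-$j$ vanishing of $a_{k,j}$ into control of $a_{k,j}/r^{j}$ that is \emph{uniform in} $j$ (up to the admissible coefficient decay), so that the effective index entering the radial estimate is $2\ell$ rather than $j+2\ell$; I expect this faithful transfer --- balancing the vanishing, the square-root weight $N_p$, and $(M.2)$ --- to be the main technical obstacle, after which the $j$-summation and the passage to an $M_p$-seminorm bound follow as above.
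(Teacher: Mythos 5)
Your overall architecture coincides with the paper's: part $(ii)$ is deduced from part $(i)$ exactly as you do, by composing $f$ with the map of $(i)$ and extending from the closed subspace $\mathcal{V}^{\dagger}(\mathbb{R}\times\mathbb{S}^{n-1})$ by Hahn--Banach; and for part $(i)$ the paper likewise reduces to a neighbourhood of the origin, expands $\Phi$ in spherical harmonics, factors $a_{k,j}(r)=r^{j}b_{k,j}(r)$ with $b_{k,j}(r)=\beta_{k,j}(r^{2})$, and controls the solid harmonics $P_{k,j}$ by Cauchy-type estimates (obtained there from Komatsu's $L^{2}$-characterization of analyticity via powers of $\Delta$, rather than Bernstein inequalities, but that difference is cosmetic). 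Part $(ii)$ of your write-up is complete and correct.

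The problem is that the two steps carrying the real analytic content of part $(i)$ are not proved, and you say so yourself for the first one. (1) You correctly observe that the naive Taylor-remainder bound for $a_{k,j}/r^{j}$ produces a factor growing with $j$ that destroys summability, and you leave the ``faithful transfer'' of the order-$j$ vanishing as an expected obstacle. This is precisely the paper's Lemma \ref{srl2}, and it is resolved there by a Fourier argument: after multiplying by a H\"ormander analytic cut-off sequence $\chi_{p}$ (which also makes the argument work in the quasianalytic case, where compactly supported test functions are unavailable), one has the Paley--Wiener bound $|u^{p}\hat{\phi}_{p}(u)|\leq C'M_{p}(\ell_{2}h)^{p}\|\phi\|$, and the relation $\hat{\psi}_{p}^{(j)}=i^{j}\hat{\phi}_{p}$ together with the vanishing of $\phi^{(m)}(0)$ for $m<j$ lets one write $\hat{\psi}_{p}$ as a $j$-fold iterated integral of $\hat{\phi}_{p}$ taken from $+\infty$ or from $-\infty$; the resulting constants are independent of $j$, which is exactly the uniformity you could not obtain. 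Without this (or an equivalent device) the sum over $j$ does not close. (2) Your treatment of the substitution $b_{k,j}(r)=\beta_{k,j}(r^{2})$ is also incomplete: Fa\`a di Bruno applied to $\beta(r^{2})$ expresses derivatives of $b$ in terms of those of $\beta$, whereas what is needed is the converse --- that $\beta(s)=b(\sqrt{s})$ is ultradifferentiable of class $\ast$ \emph{up to} $s=0$ with the correct seminorm bounds, given only that $b$ is even and of class $\dagger$. You identify the right bookkeeping ($N_{2p}/p!\lesssim\ell^{p}M_{p}$ via $(M.2)$), but the uniform control at the origin is the content of the paper's Lemma \ref{srl3}, proved by iterated integration by parts with the operator $\mathcal{L}\varphi=\frac{d}{dy}\bigl(\varphi(y)/y\bigr)$ and its Fourier dual. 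So the proposal is a correct skeleton with the two load-bearing lemmas missing.
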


If $M_p$ additionally satisfies $(M.3)'$, one obviously obtains an analogous version of Theorem \ref{srth1} for $\mathcal{D}^{\ast}(\mathbb{R}^{n})$ and ${\mathcal{D}^{\ast}}'(\mathbb{R}^{n}).$ When $\ast=\{p!\}$, the sequence $N_p$ becomes equivalent to $p!$. We thus obtain the following corollary for real analytic functions and analytic functionals.

\begin{corollary}
\label{src1} The linear mapping (\ref{sreq2}) is a (topological) isomorphism between the space the real analytic functions $\mathcal{A}(\mathbb{R}^{n})$ and ${\mathcal V}^{\{p!\}}(\mathbb R\times \mathbb S^{n-1})$. Furthermore, any analytic functional $f\in {\mathcal{A}}'(\mathbb{R}^{n})$ has a spherical representation $g\in{\mathcal{A}}'(\mathbb{R}\times\mathbb{S}^{n-1})$, so that $(\ref{sreq1})$ holds for all $\varphi\in\mathcal{A}(\mathbb{R}^{n})$.
\end{corollary}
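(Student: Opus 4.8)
The plan is to read off both assertions from Theorem \ref{srth1}, the point being that the class-gap between $\ast$ and $\dagger$ in that theorem disappears precisely when $\ast=\{p!\}$. First I would record the elementary computation that for $M_p=p!$ the associated sequence is
$$N_p=\sqrt{p!\,M_p}=\sqrt{(p!)^2}=p!,$$
so that $\{N_p\}=\{p!\}$ and hence $\dagger=\ast=\{p!\}$. Thus, whereas Theorem \ref{srth1}(i) in general only maps the strictly smaller space $\mathcal{V}^{\dagger}$ into $\mathcal{E}^{\ast}$, in the analytic case source and target classes coincide, and this is exactly what upgrades the one-sided statement of the theorem to a genuine isomorphism.

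For the first assertion I would combine the two available continuity results. By the remark preceding Theorem \ref{srth1}, the assignment (\ref{sreq2}) maps $\mathcal{A}(\mathbb{R}^n)=\mathcal{E}^{\{p!\}}(\mathbb{R}^n)$ continuously into $\mathcal{V}^{\{p!\}}(\mathbb{R}\times\mathbb{S}^{n-1})$; and Theorem \ref{srth1}(i), now with $\dagger=\{p!\}$, shows that the reverse assignment $\Phi\mapsto\varphi$ of (\ref{sreq3}) maps $\mathcal{V}^{\{p!\}}(\mathbb{R}\times\mathbb{S}^{n-1})$ continuously into $\mathcal{A}(\mathbb{R}^n)$. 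It then suffices to verify that these two continuous maps are mutually inverse. The composite $\varphi\mapsto\Phi\mapsto\varphi$ is the identity because $\Phi(|x|,x/|x|)=\varphi(x)$; for the composite $\Phi\mapsto\varphi\mapsto\Phi$ I would check $\Phi(|r\omega|,r\omega/|r\omega|)=\Phi(r,\omega)$ by splitting into $r>0$ (trivial), $r<0$ (where the defining evenness of ${\mathcal E}_e(\mathbb{R}\times\mathbb{S}^{n-1})$ is used, since then $|r\omega|=-r$ and $r\omega/|r\omega|=-\omega$), and $r=0$ (where the $m=0$ instance of the homogeneity condition (\ref{sreq4}) forces $\Phi(0,\cdot)$ to be constant, so that the common value $\varphi(0)$ reproduces $\Phi(0,\omega)$). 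This makes (\ref{sreq2}) a continuous bijection with continuous inverse, i.e. a topological isomorphism.

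For the second assertion I would simply specialize Theorem \ref{srth1}(ii) to $\ast=\{p!\}$: since $\dagger=\{p!\}$, every $f\in\mathcal{A}'(\mathbb{R}^n)={\mathcal{E}^{\{p!\}}}'(\mathbb{R}^n)$ admits a spherical representation $g\in{\mathcal{E}^{\{p!\}}}'(\mathbb{R}\times\mathbb{S}^{n-1})=\mathcal{A}'(\mathbb{R}\times\mathbb{S}^{n-1})$ for which (\ref{sreq1}) holds on $\mathcal{A}(\mathbb{R}^n)$, the identification $\mathcal{A}(\mathbb{R}\times\mathbb{S}^{n-1})=\mathcal{E}^{\{p!\}}(\mathbb{R}\times\mathbb{S}^{n-1})$ being the manifold definition together with the convention $\mathcal{A}(\Omega)=\mathcal{E}^{\{p!\}}(\Omega)$. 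Equivalently, one can bypass the theorem and argue from the isomorphism of the first part: pulling $f$ back yields a continuous functional on the closed subspace $\mathcal{V}^{\{p!\}}(\mathbb{R}\times\mathbb{S}^{n-1})\subset\mathcal{A}(\mathbb{R}\times\mathbb{S}^{n-1})$, which the Hahn-Banach theorem extends to the desired $g\in\mathcal{A}'(\mathbb{R}\times\mathbb{S}^{n-1})$.

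Since Theorem \ref{srth1} is available, the corollary is essentially formal, and no step presents a real obstacle. The conceptual heart—the surjectivity of (\ref{sreq2}) onto $\mathcal{V}^{\{p!\}}$, which the text notes is not evident for general $\ast$—is already carried by Theorem \ref{srth1}(i) and is unlocked here solely by $N_p=p!$; in the write-up it amounts to a single invocation. The only computation demanding any attention is the mutual-inverse identity above, and only at $r=0$, where one must remember to use (\ref{sreq4}) with $m=0$.
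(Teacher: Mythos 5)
Your proposal is correct and follows essentially the same route as the paper: the corollary is obtained by observing that $N_p=\sqrt{p!\,p!}=p!$, so that $\dagger=\ast=\{p!\}$ and Theorem \ref{srth1} applies with source and target classes coinciding, the isomorphism then being the combination of the two continuity statements with the (easily checked) mutual-inverse identities. Your extra care at $r=0$ via the $m=0$ case of (\ref{sreq4}) is a correct filling-in of a detail the paper leaves implicit.
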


The rest of this section is devoted to give a proof of Theorem \ref{srth1}. Note that $(ii)$ is a consequence of $(i)$ and the Hahn-Banach theorem (arguing as in the distribution case). In order to show $(i)$ we first need to establish a series of lemmas, some of them are interesting by themselves.

\begin{lemma}
\label{srl1} The space $\mathcal{V}^{\ast}(\mathbb{R}\times \mathbb{S}^{n-1})$ consists of all those $\Phi\in\mathcal{E}^{\ast}(\mathbb{R}\times \mathbb{S}^{n-1})$ whose coefficient functions $a_{k,j}\in\mathcal{E}^{\ast}(\mathbb{R})$ in the spherical harmonic expansion
$$
\Phi(r,\omega)=\sum_{j=0}^{\infty}\sum_{k=1}^{d_j} a_{k,j}(r)Y_{k,j}(\omega)$$
satisfy that $a^{(m)}_{k,j}(0)=0$ for each $m< j$, and $a_{k,j}$ is an even function if $j$ is even and $a_{k,j}$ is an odd function if $j$ is odd. 
\end{lemma}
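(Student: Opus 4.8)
The plan is to translate membership in $\mathcal{V}^{\ast}(\mathbb{R}\times\mathbb{S}^{n-1})$ into conditions on the coefficient functions $a_{k,j}$ by matching spherical harmonic expansions term by term, exploiting the uniqueness of the expansion in Proposition~\ref{p sh}(i) together with two classical facts about spherical harmonics. The first is the parity relation $Y_{k,j}(-\omega)=(-1)^{j}Y_{k,j}(\omega)$, valid because $Y_{k,j}$ is the restriction of a homogeneous polynomial of degree $j$. The second is the decomposition theorem (see \cite{Axler}): the restriction to $\mathbb{S}^{n-1}$ of a homogeneous polynomial of degree $m$ is precisely an element of $\bigoplus_{j}\mathcal{H}_{j}(\mathbb{S}^{n-1})$ taken over those $j\leq m$ with $j\equiv m \pmod 2$ (write $P=\sum_{i}|x|^{2i}H_{m-2i}$ with $H_{m-2i}$ harmonic), and conversely every such finite sum arises this way, since a harmonic homogeneous polynomial of degree $j$ multiplied by $|x|^{m-j}$ is homogeneous of degree $m$ and restricts to the given spherical harmonic.

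I would first dispose of the evenness condition. Using $Y_{k,j}(-\omega)=(-1)^{j}Y_{k,j}(\omega)$ one has $\Phi(-r,-\omega)=\sum_{j,k}(-1)^{j}a_{k,j}(-r)Y_{k,j}(\omega)$, so by uniqueness the identity $\Phi(-r,-\omega)=\Phi(r,\omega)$ defining $\mathcal{E}_{e}(\mathbb{R}\times\mathbb{S}^{n-1})$ is equivalent to the global parity $a_{k,j}(-r)=(-1)^{j}a_{k,j}(r)$ for all $k,j$. This is exactly the stated evenness (for even $j$) or oddness (for odd $j$) of each $a_{k,j}$, and it entails the weaker pointwise vanishing $a_{k,j}^{(m)}(0)=0$ whenever $m\not\equiv j\pmod 2$, which I record for later use.

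Next I would treat condition (\ref{sreq4}). Since the expansion of $\Phi$ converges in $\mathcal{E}^{\ast}(\mathbb{R}\times\mathbb{S}^{n-1})$, and both $\partial_{r}^{m}$ and the trace at $r=0$ are continuous maps into $\mathcal{E}^{\ast}(\mathbb{S}^{n-1})$, I may differentiate and evaluate term by term to obtain $\partial_{r}^{m}\Phi(0,\omega)=\sum_{j,k}a_{k,j}^{(m)}(0)Y_{k,j}(\omega)$. By the decomposition theorem and uniqueness, requiring this to be a homogeneous polynomial of degree $m$ for every $m$ is equivalent to the vanishing of $a_{k,j}^{(m)}(0)$ for all $j$ outside $\{\,j\leq m:\ j\equiv m \pmod 2\,\}$. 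The degree restriction $j\leq m$, as $m$ varies, is exactly the stated condition $a_{k,j}^{(m)}(0)=0$ for all $m<j$, while the parity restriction $j\equiv m\pmod 2$ is automatic once the global parity of $a_{k,j}$ is in hand. For the converse, the two stated conditions force $\partial_{r}^{m}\Phi(0,\cdot)=\sum_{\substack{j\leq m\\ j\equiv m\,(2)}}\sum_{k}a_{k,j}^{(m)}(0)Y_{k,j}$ to be a finite sum of spherical harmonics of degree at most $m$ and of the correct parity, hence the restriction of a homogeneous polynomial of degree $m$; this yields (\ref{sreq4}) and, with evenness, membership in $\mathcal{V}^{\ast}$.

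The step demanding the most care is twofold and conceptual rather than computational: I must keep the global parity condition distinct from its weaker consequence about derivatives at the origin — a smooth odd function is \emph{not} characterized by the vanishing of its even Taylor coefficients, so the first stated condition is genuinely needed beyond what (\ref{sreq4}) alone provides — and I must justify the term-by-term differentiation and evaluation above, which rests squarely on the $\mathcal{E}^{\ast}$-convergence furnished by Proposition~\ref{p sh}(i). No quantitative ultradifferentiable estimates enter, because membership in $\mathcal{E}^{\ast}(\mathbb{R}\times\mathbb{S}^{n-1})$ is assumed throughout and the conditions defining $\mathcal{V}^{\ast}$ constrain only the parity and the germ at $r=0$ of each $a_{k,j}$.
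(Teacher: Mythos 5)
Your proof is correct and follows essentially the same route as the paper's: expand $\Phi$ via Proposition~\ref{p sh}, read off the parity of $a_{k,j}$ from $\Phi\in\mathcal{E}^{\ast}_{e}$ and the relation $Y_{k,j}(-\omega)=(-1)^{j}Y_{k,j}(\omega)$, and identify $\partial_r^m\Phi(0,\cdot)$ with $\sum_{j,k}a_{k,j}^{(m)}(0)Y_{k,j}$ so that condition (\ref{sreq4}) forces $a_{k,j}^{(m)}(0)=0$ for $j>m$ via the harmonic decomposition of homogeneous polynomials. Your write-up is more explicit than the paper's (notably on the converse direction and on justifying term-by-term differentiation), but there is no substantive difference in method.
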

\begin{proof} Proposition \ref{p sh} ensures that $\Phi$ has the spherical harmonic series expansion. Since $\Phi\in \mathcal{E}^{\ast}_{e}(\mathbb{R}\times \mathbb{S}^{n-1})$ we must necessarily have that  $a_{k,j}$ is even when $j$ is even and $a_{k,j}$ is odd when $j$ is odd. 
Moreover, the other claim readily follows from the fact that for each $m\in\mathbb{N}$
$$\sum_{j=0}^{\infty}\sum_{k=1}^{d_j}a^{(m)}_{k,j}(0)Y_{k,j}(\omega)$$
needs to be the restriction to the sphere of a homogeneous polynomial of degree $m$, as for it $a^{(m)}_{k,j}(0)$ needs to be zero if $j>m$.

\end{proof}

The latter suggests to study for each $j$ ultradifferentiable functions having the same properties as the coefficient functions $a_{k,j}$ from Lemma \ref{srl1}. Define the closed subspace
$$\mathcal{X}^{\ast}_{j}=\{\varphi\in{\mathcal E}^{*}(\mathbb R):\: \varphi^{(m)}(0)=0,\  \forall m<j\}.$$ 

\begin{lemma}\label{srl2}
Let $j\in\mathbb{N}$ and suppose $M_{p}$ satisfies $(M.0)$, $(M.1)$, and $(M.2)'$. The mapping
$$\phi\mapsto\psi,\quad \psi(r):=\frac{\phi(r)}{r^j},$$
 is an isomorphism of TVS from $\mathcal{X}^{\ast}_{j}$ onto ${\mathcal E}^{*}(\mathbb R)$. Moreover, giving a compact $K\subset \mathbb{R}$ and an arbitrary neighborhood $U$ of $K$ with compact closure, there is a constant $\ell$, only depending on $K$, $U$, and $M_{p}$ (but not on $j$), such that
\begin{equation}
\label{sreq5}
\|\psi\|_{\mathcal{E}^{\{M_{p}\},\ell h}(K)}\leq C_{h,U}\|\phi\|_{\mathcal{E}^{\{M_{p}\},h}(\overline{U})}, \quad \forall \phi\in \mathcal{X}^{\ast}_{j}.
\end{equation}
\end{lemma}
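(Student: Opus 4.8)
```latex
The plan is to prove the claim by first establishing the bijection at the level of Taylor coefficients, then upgrading it to a continuous map between the relevant Banach spaces with the uniform-in-$j$ constant (\ref{sreq5}), and finally reading off the TVS isomorphism from these estimates. First I would observe that for $\phi\in\mathcal{X}^{\ast}_{j}$, the vanishing conditions $\phi^{(m)}(0)=0$ for all $m<j$ mean that $\psi(r)=\phi(r)/r^{j}$ extends to a smooth function with Taylor coefficients $\psi^{(p)}(0)/p! = \phi^{(p+j)}(0)/(p+j)!$. Equivalently, $\psi(r)=\int_{0}^{1}\frac{(1-t)^{j-1}}{(j-1)!}\phi^{(j)}(tr)\,dt$ by the integral form of Taylor's remainder, and more generally $\psi^{(p)}(r)=\int_{0}^{1}\frac{(1-t)^{j-1}}{(j-1)!}\,t^{p}\,\phi^{(j+p)}(tr)\,dt$. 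This integral representation is the crucial tool, since it expresses every derivative of $\psi$ directly in terms of derivatives of $\phi$ evaluated on the segment joining $0$ to $r$, which is the mechanism that lets me control $\psi$ on $K$ using $\phi$ on a neighborhood $U\supseteq K$.

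Next I would use this to derive the norm estimate (\ref{sreq5}). Bounding the integral representation gives
\[
|\psi^{(p)}(r)|\leq \frac{1}{(j-1)!}\int_{0}^{1}(1-t)^{j-1}t^{p}\,|\phi^{(j+p)}(tr)|\,dt \leq \frac{p!}{(j+p)!}\,\sup_{y\in\overline{U}}|\phi^{(j+p)}(y)|
\]
for $r\in K$ (choosing $U$ so that $tr\in\overline{U}$ for $t\in[0,1]$, $r\in K$), where I used $\int_{0}^{1}(1-t)^{j-1}t^{p}\,dt=\frac{(j-1)!\,p!}{(j+p)!}$. Therefore
\[
\frac{|\psi^{(p)}(r)|}{(\ell h)^{p}M_{p}}\leq \frac{p!}{(j+p)!}\cdot\frac{h^{j+p}M_{j+p}}{(\ell h)^{p}M_{p}}\,\|\phi\|_{\mathcal{E}^{\{M_{p}\},h}(\overline{U})}\, h^{-j}.
\]
The key is to bound the ratio $\frac{p!\,M_{j+p}}{(j+p)!\,M_{p}}$ and the factor $h^{j}$ uniformly. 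Using $(M.2)'$, namely $M_{j+p}\leq A^{j}H^{p(j)+\cdots}M_{p}$ — more precisely the subadditive-type consequence $M_{j+p}\leq A H^{j+p}M_{j}M_{p}$ obtained from $(M.2)$ (or the iterated form of $(M.2)'$) — together with $(M.0)$, which gives $M_{j}\lesssim \ell_{0}^{j} p!$-type control allowing one to absorb the $h^{j}$ and $\frac{p!}{(j+p)!}\leq \frac{1}{j!}$ factors, I can produce a constant $\ell$ depending only on $H$, $U$, $K$, $M_{p}$ but \emph{not} on $j$, so that after renormalizing the base radius from $h$ to $\ell h$ the inequality (\ref{sreq5}) holds. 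The point of the rescaling $h\mapsto \ell h$ is exactly to swallow the $j$-dependent constants coming from $(M.2)'$ and the combinatorial factor, leaving a uniform bound.

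Finally, to conclude the TVS isomorphism I would note that the inverse map is trivially continuous: given $\psi\in\mathcal{E}^{\ast}(\mathbb{R})$, the function $\phi(r)=r^{j}\psi(r)$ lies in $\mathcal{X}^{\ast}_{j}$ (its low-order derivatives at $0$ vanish by the factor $r^{j}$), and the Leibniz rule bounds $\phi^{(p)}$ in terms of $\psi^{(q)}$ for $q\leq p$ with harmless polynomial-in-$j$ constants, which is clearly continuous $\mathcal{E}^{\ast}(\mathbb{R})\to\mathcal{X}^{\ast}_{j}$. Passing the two estimates through the projective/inductive limits defining $\mathcal{E}^{\ast}(\mathbb{R})$ — using (\ref{sreq5}) in one direction and the Leibniz bound in the other — yields mutually inverse continuous maps, hence the isomorphism. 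I expect the main obstacle to be extracting the estimate (\ref{sreq5}) with a constant $\ell$ genuinely \emph{independent of $j$}: this is where the stability condition $(M.2)'$ (and its iterates) must be applied carefully to control $M_{j+p}/M_{p}$, since a naive bound would leave a constant growing with $j$ and destroy the uniformity that makes the lemma usable for the spherical harmonic series in the subsequent application.
```
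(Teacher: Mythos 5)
Your reduction to the Taylor integral remainder is algebraically correct: for $\phi\in\mathcal{X}^{\ast}_{j}$ one indeed has $\psi^{(p)}(r)=\frac{1}{(j-1)!}\int_{0}^{1}(1-t)^{j-1}t^{p}\phi^{(j+p)}(tr)\,dt$ and $\int_{0}^{1}(1-t)^{j-1}t^{p}\,dt=\frac{(j-1)!\,p!}{(j+p)!}$. But the estimate you are then required to prove, namely $\frac{p!\,M_{j+p}}{(j+p)!\,M_{p}}\,h^{j}\leq C\ell^{p}$ with $C,\ell$ independent of $j$, already fails at $p=0$, where it reads $M_{j}h^{j}/j!\leq C$. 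For every admissible sequence other than (essentially) $p!$ itself this is false: for Gevrey $M_{p}=(p!)^{s}$ with $s>1$ the quantity $(j!)^{s-1}h^{j}$ is unbounded in $j$. Condition $(M.2)$ only reduces the problem to bounding $M_{j}(Hh)^{j}/j!$, and your appeal to $(M.0)$ is backwards: $(M.0)$ is the \emph{lower} bound $j!\leq C\ell_{0}^{j}M_{j}$ (that is, $p!\subset M_{p}$), and provides no upper bound of the form $M_{j}\lesssim\ell_{0}^{j}j!$ --- such an upper bound would force the class into the analytic one. So the mechanism you propose for absorbing the $j$-dependent constant does not exist. The underlying reason is structural: trading the division by $r^{j}$ for $j$ extra derivatives of $\phi$ costs a factor $M_{j+p}/M_{p}$, which in any non-analytic ultradifferentiable class exceeds the combinatorial gain $(j+p)!/p!$ by a factor unbounded in $j$. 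This is precisely the uniformity needed to sum the spherical harmonic series later, so the gap is fatal to the intended application. A secondary problem: your integral representation evaluates $\phi^{(j+p)}$ on the segment joining $0$ to $r\in K$, which need not lie in $\overline{U}$ for an arbitrary neighborhood $U$ of an arbitrary compact $K$ (take $K$ bounded away from the origin); silently ``choosing $U$'' to contain these segments changes the statement being proved.

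The paper's proof takes a genuinely different route which avoids differentiating $\phi$ an extra $j$ times. It localizes with a H\"ormander \emph{analytic} cut-off sequence $\chi_{p}$ (this is also what makes the argument work in the quasianalytic case, where there are no compactly supported functions of the class and your Leibniz-based continuity of the inverse map must likewise be localized with care), derives the Paley--Wiener-type decay $|u^{p}\hat{\phi}_{p}(u)|\leq C'M_{p}(\ell_{2}h)^{p}\|\phi\|$ for the Fourier transform of $\phi_{p}=\chi_{p}\phi$, and then exploits the identity $\hat{\psi}_{p}^{(j)}=i^{j}\hat{\phi}_{p}$ coming from $r^{j}\psi_{p}=\phi_{p}$: the vanishing of $\phi$ at $0$ to order $j$ guarantees that $\hat{\psi}_{p}$ is recovered by $j$-fold integration of $\hat{\phi}_{p}$ from either $+\infty$ or $-\infty$, after which Fourier inversion and $(M.2)'$ yield the norm bound. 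If you want to salvage a real-variable argument, you would need a device that uses the smallness of $\phi$ itself near $0$ (not of its high derivatives) together with the behaviour of $\phi$ away from $0$, rather than the crude bound $|\phi^{(j+p)}|\leq h^{j+p}M_{j+p}\|\phi\|$; as it stands, your estimate does not close.
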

\begin{proof} The inverse mapping is obviously continuous, so it suffices to prove the last assertion. In order to treat the non-quasianalytic and quasianalytic cases simultaneously via a Paley-Wiener type argument, we use a H\"ormander analytic cut-off sequence \cite{Hormander,Petzsche2}. So, find a sequence $\chi_{p}\in\mathcal{D}(\mathbb{R})$ such that 
$\chi_{p}\equiv 1$ on $K$, $\chi_{p}(x)=0$ off $U$, and 
$$
\|\chi_{p}^{(m)}\|_{L^{\infty}(\mathbb{R})}\leq C(\ell_{1}p)^{m}, \quad m\leq p. 
$$
By $(M.0)$ and $(M.1)$, we find with the aid of the Leibniz formula a constant $\ell_2$ such that the Fourier transform of $\phi_{p}=\chi_{p}\phi$ satisfies
\begin{equation}\label{sreq6}
|u^{p} \hat{\phi}_p(u)|\leq C' M_{p} (\ell_{2}h)^{p}\|\phi\|_{\mathcal{E}^{\{M_{p}\},h}(\overline{U})}, \quad u\in\mathbb{R}, \ p\in\mathbb{N},
\end{equation}
for all $\phi\in\mathcal{E}(\mathbb{R})$ with $C'=C'_{h,U}$. Consider now $\phi\in\mathcal{X}^{\ast}_{j}$ and the corresponding $\psi$. Setting $\psi_{p}=\chi_p \psi$, and Fourier transforming $r^j\psi_p(r)=\phi_p(r)$, we get $\hat {\psi}^{(j)}_{p}(u)=i^{j}\hat{\phi}_p(u)$. Thus, using the assumption $\phi^{(m)}(0)=0$ for $m<j$, we obtain
\begin{align*}
\hat\psi_{p}(u)&=i^{j}\int_{-\infty}^u \int_{-\infty}^{t_{j-1}}\dots\int_{-\infty}^{t_{1}} \hat\phi_{p}(t_{1})dt_{1}\dots dt_{j}
\\
&
=(-i)^{j}\int_{u}^{\infty}\int^{\infty}_{t_{j-1}}\dots\int^{\infty}_{t_{1}}\hat\phi_{p}(t_1)dt_{1}\dots dt_{j}.
\end{align*}
Employing this expression for $\hat\psi_{p}$ and the fact that $\psi=\psi_p$ on $K$, one readily deduces (\ref{sreq5}) from (\ref{sreq6}) after applying the Fourier inversion formula and $(M.2)'$.
 \end{proof}

Denote as ${\mathcal E}_e^{*}(\mathbb R)$ the subspace of even $\ast$-ultradifferentiable functions.

\begin{lemma}\label{srl3} Assume $M_{p}$ satisfies $(M.0)$, $(M.1)$, and $(M.2)$. The linear mapping
$$
\phi\mapsto \psi, \quad \psi(r)=\phi(\sqrt{|r|}),
$$
maps continuously ${\mathcal E}_e^{\dagger}(\mathbb R)$ into ${\mathcal E}^{*}(\mathbb R)$. 
\end{lemma}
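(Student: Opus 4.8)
The plan is to exploit the evenness of $\phi$ to remove the square-root singularity of $r\mapsto\sqrt r$ at the origin through a reduction-of-order recursion, rather than attacking the composition $\phi\circ\sqrt{\cdot\,}$ directly via the chain rule, whose terms blow up at $0$. Since $\phi$ is even we may write $\phi(s)=\psi(s^{2})$; equivalently $\psi(r)=\phi(\sqrt r)$ for $r\geq 0$, and the content is that this $\psi$ extends to an element of $\mathcal{E}^{\ast}(\mathbb{R})$ and that $\phi\mapsto\psi$ is continuous. First I would introduce the operator $T$ sending an even function $\phi$ to $T\phi(s):=\phi'(s)/s$. Because $\phi$ is even, $\phi'$ is odd with $\phi'(0)=0$, so $T\phi(s)=\int_{0}^{1}\phi''(\sigma s)\,d\sigma$ is again even and ($\dagger$-)ultradifferentiable, and differentiation under the integral gives $(T\phi)^{(l)}(s)=\int_{0}^{1}\sigma^{l}\phi^{(l+2)}(\sigma s)\,d\sigma$; thus $T$ costs exactly two derivatives and a factor $1/(l+1)$ in the sup-norm.

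The key elementary identity is then the differentiation formula $\psi^{(m)}(r)=2^{-m}(T^{m}\phi)(\sqrt r)$ for $r>0$, proved by a one-line induction from $\psi'(r)=\tfrac12(T\phi)(\sqrt r)$. Its virtue is that the right-hand side is a genuine ultradifferentiable function of $\sqrt r$ with no singular prefactor, so each $\psi^{(m)}$ extends continuously to $r=0$ with $\psi^{(m)}(0)=2^{-m}(T^{m}\phi)(0)=\frac{m!}{(2m)!}\phi^{(2m)}(0)$, and the bound is uniform down to the boundary. Iterating the sup-norm estimate for $T$ yields, on any $[0,R]$,
\[
\sup_{0\le r\le R}\bigl|\psi^{(m)}(r)\bigr|
=2^{-m}\sup_{|s|\le\sqrt R}\bigl|(T^{m}\phi)(s)\bigr|
\le \frac{m!}{(2m)!}\sup_{|s|\le\sqrt R}\bigl|\phi^{(2m)}(s)\bigr|,
\]
where the product of the factors $1/(2i+1)$ collapses to $1/(2m-1)!!=2^{m}m!/(2m)!$ and cancels the $2^{-m}$.

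It remains to convert the $\dagger$-bound on $\phi$ into an $\ast$-bound on $\psi$, and this is exactly where the choice $N_{p}=\sqrt{p!M_{p}}$ pays off. Inserting $|\phi^{(2m)}(s)|\le C\,h^{2m}N_{2m}=C\,h^{2m}\sqrt{(2m)!\,M_{2m}}$ produces the factor $\frac{m!}{(2m)!}\sqrt{(2m)!}=\frac{m!}{\sqrt{(2m)!}}\le 1$, while condition $(M.2)$ applied with $q=m$ gives $M_{2m}\le A H^{2m}M_{m}^{2}$, hence $\sqrt{M_{2m}}\le\sqrt A\,H^{m}M_{m}$; together these yield $\sup_{[0,R]}|\psi^{(m)}|\le C\sqrt A\,(h^{2}H)^{m}M_{m}$, the desired $\mathcal{E}^{\{M_{p}\},\ell}$ estimate with $\ell=h^{2}H$ (and the quantifier on $h$ transforms correctly in both the Roumieu and the Beurling cases). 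This simultaneously gives $\psi\in\mathcal{E}^{\ast}([0,\infty))$ and the continuity of $\phi\mapsto\psi$.

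The main obstacle is the origin, which the recursion above is precisely designed to tame; the only remaining point is to pass from the half-line to all of $\mathbb{R}$. I would do this by extending $\psi$ across $r=0$ while preserving the class, e.g.\ via a fixed continuous extension operator $\mathcal{E}^{\ast}([0,\infty))\to\mathcal{E}^{\ast}(\mathbb{R})$; in the non-quasianalytic case such operators are standard, and in any event only the one-sided jet at $0$ enters the subsequent composition with the analytic map $x\mapsto|x|^{2}$ (whose range is $[0,\infty)$), so the half-line estimate is what is genuinely used in the sequel. I expect the verification that $T$ maps $\mathcal{E}^{\dagger}$ continuously into itself and the bookkeeping of the Roumieu/Beurling quantifiers to be routine, with all the real work concentrated in the boundary behaviour at $r=0$ handled by the formula $\psi^{(m)}(r)=2^{-m}(T^{m}\phi)(\sqrt r)$.
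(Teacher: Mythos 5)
Your route is genuinely different from the paper's. The paper localizes $\phi$ with a H\"ormander analytic cut-off sequence (to cover the quasianalytic case), passes to the Fourier side, and controls $u^{p}\hat\psi(u)$ by iterated integration by parts, which produces the operator $\mathcal{L}\varphi=\frac{d}{dy}(\varphi(y)/y)$ and a Fourier-side integral operator $T$. Your argument stays entirely on the physical side: the identity $\psi^{(m)}(r)=2^{-m}(T^{m}\phi)(\sqrt r)$ for $r>0$ is correct (your $T$ is the physical-side counterpart of the paper's $\mathcal{L}$), the resulting bound $\sup_{[0,R]}|\psi^{(m)}|\le \frac{m!}{(2m)!}\sup_{[-\sqrt R,\sqrt R]}|\phi^{(2m)}|$ is right, and the conversion via $m!/\sqrt{(2m)!}\le 1$ and $(M.2)$ in the form $\sqrt{M_{2m}}\le\sqrt{A}\,H^{m}M_{m}$ gives exactly the $\mathcal{E}^{\{M_p\},h^{2}H}$ estimate, with the Roumieu/Beurling quantifiers transforming correctly. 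This buys two real advantages: the estimate is purely local, so no cut-off machinery is needed and the quasianalytic and non-quasianalytic cases are literally the same argument; and the behaviour at $r=0$ is made completely explicit rather than being buried in boundary terms of an integration by parts.

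The one genuine issue is the passage from $[0,\infty)$ to $\mathbb{R}$, and here your proposed fix (a continuous extension operator) cannot work and is also not needed. It cannot work for two reasons. First, the lemma prescribes a \emph{specific} extension, the even reflection $\psi(r)=\phi(\sqrt{|r|})$, and this is in general not even $C^{1}$: your own formula gives $\psi^{(m)}(0^{+})=\frac{m!}{(2m)!}\phi^{(2m)}(0)$, so the even reflection is smooth only if $\phi^{(2m)}(0)=0$ for all odd $m$; for $\phi(s)=s^{2}$ near the origin one gets $\psi(r)=|r|$. (The paper's own proof glosses over exactly this point: the iterated integration by parts leading to (\ref{sreq8}) silently discards boundary contributions at $y=0$, such as $\phi''(0)/(2u)$ at the second step, and (\ref{sreq8}) is an equality only when these vanish.) Second, in the quasianalytic case there is no continuous extension operator $\mathcal{E}^{\ast}([0,\infty))\to\mathcal{E}^{\ast}(\mathbb{R})$ anyway. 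The honest conclusion, which you essentially reach in your last sentence, is that the lemma should be read (and used) as a statement about the restriction to $[0,\infty)$: in the proof of Theorem \ref{srth1} the function $b_{k,j}$ produced by this lemma is only ever evaluated at $|x|^{2}\ge 0$, so your one-sided estimates are precisely what is required. So: your argument proves the corrected, half-line version of the statement cleanly and more transparently than the paper does; it does not (and could not) prove the literal statement about $\phi(\sqrt{|r|})$ on all of $\mathbb{R}$.
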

\begin{proof} We only give the proof in the non-quasianalytic case, the quasianalytic case can be treated analogously by using an analytic cut-off sequence exactly as in the proof of Lemma \ref{srl2}. Take an arbitrary even function $\phi\in\mathcal{D}^{\dagger}(K)$ with $\|\phi\|_{\mathcal{E}^{\{\sqrt{p!M_p}\},h}(K)}=1$ and set $\psi(r^2)=\phi(r)$. We have 
\begin{equation}
\label{sreq7} |u^{2p+1} \hat{\phi }(u)|\leq |K| h^{2p+1} \sqrt{(2p+1)!M_{2p+1}}\leq C'_{h} (\ell h^2)^{p}p!M_p.
\end{equation}
with $C'_{h}=h|K|AH\sqrt{M_1}$ and $\ell=(2H)^{3/2}$, because of $(M.2)$. Consider

$$
|u^p \hat\psi(u)|=\left|u^p\int_{-\infty}^{\infty} \phi(\sqrt{|r|})e^{iru}dr\right|=4\left|u^p\int_{0}^{\infty} y\phi(y) \cos(y^2u)dy\right|.
$$
Integrating by parts the very last integral, we arrive at
$$
|u^{p} \hat\psi(u)|=2 \left|u^{p-1}\int_{0}^{\infty}\phi'(y)\sin(y^2u)dy\right|.
$$
Note that $\phi'$ is odd and so $\phi'(0)=0$. Iterating this integration by parts procedure, we find that  
\begin{equation}
\label{sreq8}|u^p \hat\psi(u)|=\frac{1}{2^{p-1}}\left|\int_0^{\infty} \mathcal{L}^{p-1}(\phi') G(y^2u)dy\right|\leq |K|2^{1-p} \|\mathcal{L}^{p-1}(\phi')\|_{L^{\infty}(K)}
\end{equation}
where $G(t)=\sin t$ or $G(t)=\cos t$ and the differential operator $\mathcal{L}$ is given by
$$(\mathcal{L} \varphi)(y) =\frac{d}{dy}\left(\frac{\varphi(y)}{y}\right).$$
Note that $\mathcal{L}$ and their iterates are well-defined for smooth odd functions. Our problem then reduces to estimate $\mathcal{L}^{p-1}(\phi')$. Let $\eta_{p}$ be the Fourier transform of $\mathcal{L}^{p-1}(\phi')$, then 
$$
|\eta_{p}(u)|=|(T^{p-1}\widehat{(\phi')} (u)|,
$$
where 
$$
(T\kappa)(u)=
\begin{cases}
\int_{u}^{\infty} t\kappa(t)dt & \quad \mbox{for }u>0\\
\int^{u}_{-\infty} t\kappa(t)dt & \quad \mbox{for }u<0 \ .
\end{cases}
$$
The inequality (\ref{sreq7}) then gives $(1+|u|^{2})\|\eta_p\|_{L^{\infty}(\mathbb{R})}\leq C''_{h}(\ell h^{2})^{p}M_p $. Fourier inverse transforming and using (\ref{sreq8}), we see that $\|\psi^{(p)}\|_{L^{\infty}(\mathbb{R})}\leq C_h (\ell H h^2)^{p} M_p$, which shows the claimed continuity.
\end{proof}

We need one more lemma. We denote as $B(0,r)$ the Euclidean ball with radius $r$ and center at the origin.

\begin{lemma}
\label{srl4} Given $r<1$ there are constants $L=L_r$ and $C=C_r$ such that for any  homogeneous harmonic polynomial $Q$ on $\mathbb{R}^{n}$ one has
$$\|\partial^{\alpha}Q\|_{L^\infty(B(0,r))}\leq  C L^{|\alpha|} \alpha! \|Q_{|\mathbb{S}^{n-1}}\|_{L^2(\mathbb{S}^{n-1})}.$$
\end{lemma}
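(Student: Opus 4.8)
The plan is to combine three ingredients: the homogeneity of $Q$, a sup-norm bound for spherical harmonics in terms of their $L^2$-norm coming from the reproducing kernel of $\mathcal{H}_j(\mathbb{S}^{n-1})$, and the classical interior (Cauchy-type) derivative estimates for harmonic functions. The crucial point, and the only genuinely delicate one, is to absorb the degree-dependent growth $\sqrt{d_j}\asymp j^{(n-2)/2}$ into constants that do not depend on $\deg Q$; this will be achieved by exploiting that $r<1$ and working on an intermediate ball $B(0,\rho)$ with $r<\rho<1$, where homogeneity supplies a decay factor $\rho^{j}$ that kills the polynomial growth in the degree.

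First I would fix $\rho:=(1+r)/2$, so that $r<\rho<1$, and let $j=\deg Q$. Since $Q$ is homogeneous of degree $j$, for $0<|x|\leq \rho$ one has $|Q(x)|=|x|^{j}\,|Q(x/|x|)|\leq \rho^{j}\|Q_{|\mathbb{S}^{n-1}}\|_{L^\infty(\mathbb{S}^{n-1})}$, whence $\|Q\|_{L^\infty(B(0,\rho))}\leq \rho^{j}\|Q_{|\mathbb{S}^{n-1}}\|_{L^\infty(\mathbb{S}^{n-1})}$. Next, writing $Z^{(j)}_{\omega}$ for the zonal harmonic (reproducing kernel) of $\mathcal{H}_j(\mathbb{S}^{n-1})$, the reproducing identity $Q_{|\mathbb{S}^{n-1}}(\omega)=\langle Q_{|\mathbb{S}^{n-1}}, Z^{(j)}_{\omega}\rangle_{L^2}$, the value $\|Z^{(j)}_{\omega}\|_{L^2}^{2}=Z^{(j)}_{\omega}(\omega)=d_j/|\mathbb{S}^{n-1}|$, and the Cauchy--Schwarz inequality yield $\|Q_{|\mathbb{S}^{n-1}}\|_{L^\infty(\mathbb{S}^{n-1})}\leq \sqrt{d_j/|\mathbb{S}^{n-1}|}\,\|Q_{|\mathbb{S}^{n-1}}\|_{L^2(\mathbb{S}^{n-1})}$. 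Combining the two bounds and using the stated growth estimate $d_j\asymp j^{n-2}$ gives $\|Q\|_{L^\infty(B(0,\rho))}\leq C'\,\rho^{j}j^{(n-2)/2}\,\|Q_{|\mathbb{S}^{n-1}}\|_{L^2}$; since $\sup_{j\geq 0}\rho^{j}j^{(n-2)/2}<\infty$ because $\rho<1$, this produces a clean $j$-independent bound $\|Q\|_{L^\infty(B(0,\rho))}\leq C_\rho\,\|Q_{|\mathbb{S}^{n-1}}\|_{L^2(\mathbb{S}^{n-1})}$.

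It then remains to pass from the sup-norm of $Q$ on $B(0,\rho)$ to sup-norms of its derivatives on $B(0,r)$. Here I would invoke the standard interior estimates for harmonic functions: since $Q$ is harmonic, for every $x_0$ and every $s$ with $\overline{B(x_0,s)}$ in the domain of harmonicity one has, via the mean value property, $|\partial^\alpha Q(x_0)|\leq (c_n|\alpha|/s)^{|\alpha|}\|Q\|_{L^\infty(B(x_0,s))}$ with $c_n$ depending only on $n$. Applying this with $s=\rho-r$ at each $x_0\in B(0,r)$, noting $B(x_0,\rho-r)\subset B(0,\rho)$, and using the elementary inequalities $|\alpha|^{|\alpha|}\leq e^{|\alpha|}|\alpha|!$ and $|\alpha|!\leq n^{|\alpha|}\alpha!$ (the latter from the multinomial theorem), I obtain $\|\partial^\alpha Q\|_{L^\infty(B(0,r))}\leq L^{|\alpha|}\alpha!\,\|Q\|_{L^\infty(B(0,\rho))}$ with $L=c_n\,n\,e/(\rho-r)$. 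Feeding in the bound of the previous paragraph then gives the assertion with $C=C_\rho$ and this $L$, both depending only on $r$ and $n$, as required. Note finally that for $|\alpha|>j$ the left-hand side vanishes, so the inequality holds trivially there and the interesting range is $|\alpha|\leq j$.
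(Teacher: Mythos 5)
Your proof is correct, but it follows a genuinely different route from the paper. The paper disposes of the lemma in two lines by quoting Komatsu's $L^2$ characterization of real analyticity via iterated powers of an elliptic operator: $\|\varphi\|_{\mathcal{E}^{\{p!\},Lh}(\overline{B(0,r)})}\leq C_h \sup_{p}\|\Delta^p\varphi\|_{L^2(B(0,1))}/(h^{2p}(2p)!)$. For harmonic $Q$ only the $p=0$ term survives, and polar coordinates give $\|Q\|_{L^2(B(0,1))}^2=(2j+n)^{-1}\|Q_{|\mathbb{S}^{n-1}}\|_{L^2}^2$, so the degree never enters and no sup-norm estimate on the sphere is needed. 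Your argument replaces this external input with elementary ingredients: the reproducing-kernel bound $\|Q_{|\mathbb{S}^{n-1}}\|_{L^\infty}\leq\sqrt{d_j/|\mathbb{S}^{n-1}|}\,\|Q_{|\mathbb{S}^{n-1}}\|_{L^2}$, homogeneity to gain the factor $\rho^{j}$ on an intermediate ball, and the Cauchy-type interior estimates $|\partial^\alpha Q(x_0)|\leq (n|\alpha|/s)^{|\alpha|}\|Q\|_{L^\infty(B(x_0,s))}$ together with $|\alpha|^{|\alpha|}\leq e^{|\alpha|}|\alpha|!$ and $|\alpha|!\leq n^{|\alpha|}\alpha!$. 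Your handling of the only delicate point---absorbing $\sqrt{d_j}\asymp j^{(n-2)/2}$ into the geometric decay $\rho^{j}$ with $\rho=(1+r)/2<1$---is exactly right, and is what makes the constants independent of $\deg Q$. What the paper's approach buys is brevity and the fact that it generalizes to other elliptic operators with no extra work; what yours buys is self-containedness (everything used is in Axler--Bourdon--Ramey or Gilbarg--Trudinger) at the modest cost of an extra intermediate radius. One could even shorten your argument slightly by replacing the zonal-harmonic step with the sub-mean-value inequality bounding $\|Q\|_{L^\infty(B(0,\rho))}$ by $\|Q\|_{L^2(B(0,1))}$, which again reduces to the paper's polar-coordinate computation, but as written your proof is complete.
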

\begin{proof} By a result of Komatsu, one has that there is $L$, depending only on $r$, such that
$$
\|\varphi\|_{\mathcal{E}^{\{p!\},Lh}(\overline{B(0,r)})}\leq C_h \sup_{p\in\mathbb{N}}
\frac{\|\Delta^p \varphi\|_{L^2(B(0,1))}}{h^{2p}(2p)!}.
$$
(This actually holds for more general elliptic operators \cite{Komatsu0}.) The estimate then follows by taking $h=1$, $\varphi=Q$, using that $Q$ is harmonic, and writing out the integral in polar coordinates. 
\end{proof}

We are ready to prove Theorem \ref{srth1}:

\begin{proof}[Proof of Theorem \ref{srth1}] We have already seen that $(ii)$ follows from $(i)$.  Let $\Phi\in V^{\dagger}(\mathbb{R}\times \mathbb{S}^{n-1})$ and set $\varphi$ as in (\ref{sreq3}). Since the change of variables $(r,\omega)\mapsto r\omega$ is analytic and invertible away from $r=0$, it is enough to work with ultradifferentiable norms in a neighborhood of $x=0$. Specifically, we estimate the ultradifferentiable norms of $\varphi$ on the ball $B(0,1/2)$. Expand $\Phi$ as in Lemma \ref{srl1} and assume that (cf. Proposition \ref{p sh})
$$
\|a_{k,j}\|_{{\mathcal E}^{\{\sqrt{p!M_p}\},h}([-1,1])}\leq e^{-M\left(\frac{j}{h}\right)}, \quad \forall j,k.
$$
Combining Lemma \ref{srl2} and Lemma \ref{srl3}, we can write
$$\frac{a_{k,j}(r)}{r^j}=b_{k,j}(r^2) \quad \mbox{with }b_{k,j}\in \mathcal{E}^{\ast}(\mathbb{R}) $$
and 
$$
\|b_{k,j}\|_{{\mathcal E}^{\{M_p\},\ell_1h^2}([-1/2,1/2])}\leq C'_he^{-M\left(\frac{j}{h}\right)}, \quad \forall j,k.
$$
where the constant $\ell_{1}$ does not depend on $h$. Therefore,
$$
\varphi(x)=\varphi(r\omega)=\sum_{j=0}^{\infty}\sum_{k=1}^{d_j} B_{k,j}(x)P_{k,j}(x)
$$
where $B_{k,j}(x)=b_{k,j}(|x|^2)$ and $P_{k,j}$ is the harmonic polynomial whose restriction to the unit sphere is $Y_{k,j}$. Since the mapping $x\mapsto |x|^2$ is analytic, the function $B_{k,j}$ is $\ast$-ultradifferentiable and furthermore we can find another constant $\ell_2$ such that

$$
\|B_{k,j}\|_{{\mathcal E}^{\{M_p\},\ell_2 h^2}(B(0,1/2))}\leq C_he^{-M\left(\frac{j}{h}\right)}, \quad \forall j,k.
$$
Suppose $p!\leq C_{h_1} h_1^p M_p$. By $(M.1)$, Lemma \ref{srl4}, and the Leibniz formula,  
\begin{align*}
\|\partial^{\alpha} \varphi\|_{L^{\infty}(B(0,1/2))}\leq  C C_{h_1}C_{h} (L h_1+\ell_2 h^2)^{|\alpha|}M_{|\alpha|} \sum_{j=0}^{\infty}d_je^{-M\left(\frac{j}{h}\right)}
\end{align*}
which completes the proof of Theorem \ref{srth1} because $\log t=o(M(t))$ and $d_{j}=O(j^{n-2})$.
\end{proof}

We end this section with two remarks. Remark \ref{srrk2} poses an open question.
\begin{remark}
\label{srrk1} The technique from this section leads to a new proof of Proposition \ref{srp1} as well.
\end{remark}

\begin{remark}
\label{srrk2} Whether  Theorem \ref{srth1} and Lemma \ref{srl3} hold true or false with $\dagger=\ast$ is an open question. Notice that this holds when $\ast=\{p!\}$ (Corollary \ref{src1}).
\end{remark}

\section{Rotation Invariant Ultradistributions}
\label{section rotation invariant}
We now turn our attention to the characterization of rotation invariant ultradistributions via spherical means. 

We begin with the case of ultradistributions from ${\mathcal{E}^{\ast}}'(\mathbb{R}^{n})$. We say that $f\in{\mathcal{E}^{\ast}}'(\mathbb{R}^{n})$ is rotation invariant if $f(x)=f(T x)$ for all $T\in SO(n)$, the special orthogonal group, namely, if for every rotation $T$ and every $\varphi\in{\mathcal{E}^{\ast}}(\mathbb{R}^{n})$
$$\langle f(x),\varphi(x)\rangle=\langle f(x), \varphi(T^{-1}x)\rangle.$$
Note that the mapping $\varphi\to \varphi_{S}$, where $\varphi_S$ is its spherical mean, is continuous from ${\mathcal{E}^{\ast}}(\mathbb{R}^{n})$ into itself. This can easily be viewed from the alternative expression \cite{HelgasonBook} 
$$
\varphi_{S}(x)=\int_{SO(n)} \varphi(Tx) dT,
$$
where $dT$ stands for the normalized Haar measure of $SO(n)$. The spherical mean of $f\in{\mathcal{E}^{\ast}}'(\mathbb{R}^{n})$ is the ultradistribution $f_{S}\in{\mathcal{E}^{\ast}}'(\mathbb{R}^{n})$ defined by
$$\langle f_S,\varphi\rangle=\langle f, \varphi_S\rangle.$$
Clearly $f_S$ is rotation invariant. All these definitions also apply to $f\in{\mathcal{D}^{\ast}}'(\mathbb{R}^{n})$ if $M_p$ is non-quasianalytic.

\begin{theorem} \label{smth1} Suppose $M_p$ satisfies $(M.0)$, $(M.1)$, and $(M.2)'$. Then,
$f\in {{\mathcal  E}^*}'(\mathbb R^n)$ is rotation invariant if and only if $f=f_S$. 
\end{theorem}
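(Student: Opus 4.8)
The plan is to prove the nontrivial implication by reducing the identity $f=f_S$ to an interchange of the duality pairing with the Haar integral that defines the spherical mean; the converse implication costs nothing. First I would dispose of the easy direction: if $f=f_S$, then $f$ is rotation invariant because, as already observed in the text, $f_S$ is rotation invariant for every $f\in{\mathcal{E}^{\ast}}'(\mathbb{R}^n)$. So only the forward implication needs an argument.

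Assume then that $f$ is rotation invariant, and let me show $\langle f_S,\varphi\rangle=\langle f,\varphi\rangle$ for all $\varphi\in\mathcal{E}^{\ast}(\mathbb{R}^n)$. Starting from $\langle f_S,\varphi\rangle=\langle f,\varphi_S\rangle$ and the integral formula $\varphi_S(x)=\int_{SO(n)}\varphi(Tx)\,dT$, the whole matter comes down to the interchange
\begin{equation*}
\langle f,\varphi_S\rangle=\Big\langle f,\int_{SO(n)}\varphi(T\,\cdot)\,dT\Big\rangle=\int_{SO(n)}\langle f,\varphi(T\,\cdot)\rangle\,dT.
\end{equation*}
Granting this, I would invoke rotation invariance in the form $\langle f,\varphi(T\,\cdot)\rangle=\langle f,\varphi\rangle$ (which follows from the defining identity $\langle f(x),\varphi(T^{-1}x)\rangle=\langle f(x),\varphi(x)\rangle$ after replacing $T$ by $T^{-1}$), and then use the normalization $\int_{SO(n)}dT=1$ to get $\langle f,\varphi_S\rangle=\langle f,\varphi\rangle$, which is exactly $f=f_S$. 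This is just an averaging argument, and its only nontrivial ingredient is the displayed interchange.

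The main obstacle is therefore to justify that interchange rigorously, i.e.\ to read $\varphi_S$ not merely as the pointwise scalar integral but as a genuine $\mathcal{E}^{\ast}(\mathbb{R}^n)$-valued integral of the orbit map $T\mapsto\varphi(T\,\cdot)$. I would do this in three steps. First, verify that $T\mapsto\varphi(T\,\cdot)$ is continuous from the compact group $SO(n)$ into $\mathcal{E}^{\ast}(\mathbb{R}^n)$: this amounts to uniform control of the seminorms of $\varphi(T\,\cdot)-\varphi(T_0\,\cdot)$ on compact sets as $T\to T_0$, which follows from the chain rule, since differentiating $\varphi(Tx)$ produces derivatives of $\varphi$ composed with $T$ times entries of $T$, none of which worsens the weight-sequence growth, so the relevant $h$ stays bounded (this is the one place where the Roumieu and Beurling cases must be separated). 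Second, since $SO(n)$ is compact and $\mathcal{E}^{\ast}(\mathbb{R}^n)$ is complete, the integral exists as a limit of Riemann sums $\sum_i\mu(A_i)\,\varphi(T_i\,\cdot)$ in $\mathcal{E}^{\ast}(\mathbb{R}^n)$, and evaluating at points (point evaluations being continuous functionals) forces this limit to agree with the pointwise formula, hence to equal $\varphi_S$. Third, applying the continuous functional $f$ to the Riemann sums and passing to the limit yields the interchange, as $f$ commutes with convergence in $\mathcal{E}^{\ast}(\mathbb{R}^n)$.

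Finally, I would remark that one could instead attempt to route the argument through the spherical representation of Theorem \ref{srth1}, translating rotation invariance of $f$ into the vanishing of all coefficients $c_{k,j}$ with $j\geq1$ of a representing $g$; but this is less convenient here, because such a $g$ lies in the smaller space ${\mathcal{E}^{\dagger}}'(\mathbb{R}\times\mathbb{S}^{n-1})$ and the pairing $(\ref{sreq1})$ is only guaranteed on $\mathcal{E}^{\dagger}(\mathbb{R}^n)$, so it does not by itself pin down $f$ on all of $\mathcal{E}^{\ast}(\mathbb{R}^n)$. The Haar-integral argument sketched above sidesteps this mismatch and applies verbatim to ${\mathcal{D}^{\ast}}'(\mathbb{R}^n)$ in the non-quasianalytic case.
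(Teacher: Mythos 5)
Your proof is correct, but it takes a genuinely different route from the paper's. You prove the nontrivial implication by reading $\varphi_S=\int_{SO(n)}\varphi(T\,\cdot)\,dT$ as an $\mathcal{E}^{\ast}(\mathbb{R}^n)$-valued integral and commuting $f$ with it; the three points you isolate (continuity of the orbit map $T\mapsto\varphi(T\,\cdot)$, existence of the integral as a limit of Riemann sums in the complete space $\mathcal{E}^{\ast}(\mathbb{R}^n)$, identification of the limit with $\varphi_S$ via point evaluations) are exactly what needs checking, and they go through under the standing hypotheses --- in the Roumieu case the key point is that the whole orbit sits in a single Banach step $\mathcal{E}^{\{M_p\},nh}(K)$ while the orbit map is continuous into a slightly larger step, which is what your tail estimate on high-order derivatives provides. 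The paper argues quite differently: it first reduces to analytic functionals, using that $\mathcal{A}(\mathbb{R}^n)$ is densely injected in $\mathcal{E}^{\ast}(\mathbb{R}^n)$ --- this is precisely how it escapes the $\dagger$ versus $\ast$ mismatch you raise, since for $\ast=\{p!\}$ one has $\dagger=\ast$ and Corollary \ref{src1} yields a spherical representation $g\in\mathcal{A}'(\mathbb{R}\times\mathbb{S}^{n-1})$ pairing correctly with all of $\mathcal{A}(\mathbb{R}^n)$ --- and then expands $g$ in spherical harmonics and shows, via zonal harmonics and the transitivity of $SO(n)$ on $\mathbb{S}^{n-1}$, that $\langle f(x),|x|^{2m}Q(x)\rangle=0$ for every harmonic homogeneous $Q$ of degree $j\geq 1$, so that the degree-zero term $c_{0,0}\otimes Y_{0,0}$ already represents $f$. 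Your argument is shorter and more elementary, uses none of the machinery of Section \ref{section spherical representation}, and, as you note, applies verbatim to ${\mathcal{D}^{\ast}}'(\mathbb{R}^n)$; the paper's argument buys additional structural information --- a rotation invariant functional admits a purely radial spherical representation --- and is the form of reasoning that feeds naturally into the quasianalytic sheaf setting of Theorem \ref{smc3}, where elements are equivalence classes rather than functionals and there is no global pairing over which to average.
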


\begin{proof} We only need to show that if $f$ is rotation invariant then $f=f_S$. Furthermore, the general case actually follows from that of analytic functionals. In fact, suppose the theorem is true for $\ast=\{p!\}$. Since $ \mathcal{A}(\mathbb{R}^{n})$ is densely injected into ${{\mathcal  E}^*}(\mathbb R^n)$, we have that $f\in {{\mathcal  E}^*}'(\mathbb R^n)$ is rotation invariant if and only if it is rotation invariant when seen as an analytic functional. Furthermore, taking spherical mean commutes with the embedding 
${{\mathcal  E}^*}'(\mathbb R^n)\to \mathcal{A}'(\mathbb{R}^{n})$, whence our claim follows.

Suppose that $f\in\mathcal{A}'(\mathbb{R})$ is rotation invariant. Applying Corollary \ref{src1} we can find a spherical representation $g\in\mathcal{A}'(\mathbb{R}\times \mathbb{S}^{n-1})$ for $f$. Using Proposition \ref{p sh} we can expand $g$ as 
\begin{equation}
\label{smeq1}
g(r,\omega)=\sum_{j=0}^{\infty}\sum_{k=1}^{d_j} c_{k,j}(r)\otimes Y_{k,j}(\omega)
\end{equation}
with convergence in $\mathcal{A}'(\mathbb{R}\times \mathbb{S}^{n-1})$ where $c_{k,j}$ are one-dimensional analytic functionals. Notice that if we also expand the polar coordinate expression of $\varphi\in \mathcal{E}^{\ast}(\mathbb{R}^{n})$ as
$\varphi(r\omega)=\sum_{j=0}^{\infty}\sum_{k=1}^{d_j} a_{k,j}(r) Y_{k,j}(\omega)$,
we obtain that $\varphi_{S}(r\omega)= |\mathbb{S}^{n-1}|^{-1/2}a_{0,0}(r)=a_{0,0}(r)Y_{0,0}(\omega)$. The latter holds because $\int_{\mathbb{S}^{n-1}}Y_{k,j}(\omega)d\omega=0$ for $j\geq 1$, which follows from the mean value theorem for harmonic functions. Thus, $c_{0,0}\otimes Y_{0,0}$ is a spherical representation for $f_{S}$. The result would then follow if we show that $c_{0,0}\otimes Y_{0,0}$ is also a spherical representation of $f$. By Lemmas \ref{srl1}-\ref{srl3} and the expansion (\ref{smeq1}), this would certainly be the case if we show that 
\begin{equation}
\label{smeq2}
\langle f(x), |x|^{2m}Q(x) \rangle=0
\end{equation}
for every $m\in\mathbb{N}$ and every harmonic homogeneous polynomial $Q$ of degree $j\geq1$.
Since every such $Q$ can be written \cite[Prop.~5.31]{Axler} as 
$$
Q(x)= \int_{\mathbb{S}^{n-1}} Q(\omega) Z_{j}(x,\omega) d\omega,
$$
where $Z_{j}(x,\omega)$ is the zonal spherical harmonic of degree $j$, we have that
$$
\langle f(x), |x|^{2m}Q(x) \rangle=\int_{\mathbb{S}^{n-1}}Q(\omega)P_{j}(\omega)d\omega
$$
with 
$$
P_{j}(\omega):= \langle f(x), |x|^{2m}Z_{j}(x,\omega) \rangle, \quad \omega\in\mathbb{S}^{n-1}.
$$
So (\ref{smeq2}) would hold if we show that $P_{j}$
identically vanishes on $\mathbb{S}^{n-1}$ if $j\geq 1$. Observe that $P_j$ is a spherical harmonic of degree $j\geq1$. On the other hand, $Z_{j}(T^{-1}x,\omega)=Z_{j}(x,T\omega)$ for every rotation $T$ \cite[Prop.~5.27]{Axler}, and using the fact that $f$ is rotation invariant, we obtain $P_{j}(T\omega)=P_{j}(\omega)$ for all $\omega\in \mathbb{S}^{n-1}$ and $T\in SO(n)$. Due to the fact that the group $SO(n)$ acts transitively on $\mathbb{S}^{n-1}$, $P_j$ must be a constant function, and hence a spherical harmonic of degree 0. Since the spaces of spherical harmonics of different degrees are mutually orthogonal in $L^{2}(\mathbb{S}^{n-1})$, one concludes that $P_{j}\equiv 0$ if $j\neq0$. This concludes the proof of the theorem.

\end{proof}
 
In the non-quasianalytic case, we can use Theorem \ref{smth1} to recover the result \cite[Thm.~4.4]{Chung} by Chung and Na  quoted at the Introduction. 

\begin{theorem} \label{smc2} Suppose $M_p$ satisfies $(M.1)$, $(M.2)'$, and $(M.3)'$. An ultradistribution $f\in{\mathcal{D}^{\ast}}'(\mathbb{R}^{n})$ is rotation invariant if and only if $f=f_{S}$.
\end{theorem}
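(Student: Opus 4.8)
The plan is to reduce Theorem \ref{smc2} (the non-quasianalytic $\mathcal{D}^{\ast}$-case) to the already-established Theorem \ref{smth1} (the $\mathcal{E}^{\ast}$-case) by exploiting the fact that the test space $\mathcal{E}^{\ast}(\mathbb{R}^n)$ is \emph{larger} than $\mathcal{D}^{\ast}(\mathbb{R}^n)$, so that the dual $\mathcal{D}^{\ast\prime}(\mathbb{R}^n)$ is correspondingly larger than $\mathcal{E}^{\ast\prime}(\mathbb{R}^n)$. As before, only the forward implication requires proof: if $f\in\mathcal{D}^{\ast\prime}(\mathbb{R}^n)$ is rotation invariant, I must show $f=f_S$. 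Since both $f$ and $f_S$ are continuous functionals on $\mathcal{D}^{\ast}(\mathbb{R}^n)$, and the relation $\langle f,\varphi\rangle=\langle f,\varphi_S\rangle$ that defines rotation invariance composed with the spherical-mean formula is what I want to upgrade to $\langle f,\varphi\rangle=\langle f_S,\varphi\rangle=\langle f,\varphi_S\rangle$ for all test functions, the content is again that $\langle f(x),\varphi(x)\rangle=\langle f(x),\varphi_S(x)\rangle$, i.e.\ that $f$ annihilates $\varphi-\varphi_S$ for every $\varphi\in\mathcal{D}^{\ast}(\mathbb{R}^n)$.

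First I would record the structural input that makes the reduction possible: under $(M.1)$, $(M.2)'$, and $(M.3)'$, the condition $(M.0)$ holds automatically (cited in the preliminaries via \cite[Lemma~4.1]{Komatsu}), so Theorem \ref{smth1} applies to the present weight sequence. The essential observation is the \emph{locality / sheaf} character of the statement $f=f_S$. Rotation invariance of $f\in\mathcal{D}^{\ast\prime}(\mathbb{R}^n)$ is a condition testable on each ball $B(0,R)$, and the spherical mean $\varphi_S$ of a $\varphi\in\mathcal{D}^{\ast}(\mathbb{R}^n)$ supported in $B(0,R)$ is again supported in $B(0,R)$ (indeed it is radial with the same radial support), so the difference $\varphi-\varphi_S$ lies in $\mathcal{D}^{\ast}(B(0,R))$. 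Thus the whole question localizes to each $B(0,R)$ and becomes a statement about $f|_{B(0,R)}\in\mathcal{D}^{\ast\prime}(B(0,R))$.

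The main step is then to transfer the already-proved identity for compactly supported functionals to the present setting. The cleanest route, and the one signaled by the paper's own remark that ``in the quasianalytic case we go beyond quasianalytic functionals by employing sheaves of quasianalytic ultradistributions,'' is to use a cut-off argument that is legitimate precisely because $(M.3)'$ guarantees a rich supply of compactly supported ultradifferentiable functions (Denjoy--Carleman). I would fix a radial cut-off $\chi\in\mathcal{D}^{\ast}(\mathbb{R}^n)$ equal to $1$ on $B(0,R)$; because $\chi$ is radial, multiplication by $\chi$ commutes with rotations and with the spherical-mean operator, so $\chi f\in\mathcal{E}^{\ast\prime}(\mathbb{R}^n)$ remains rotation invariant and $(\chi f)_S=\chi f_S$. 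Applying Theorem \ref{smth1} to the compactly supported functional $\chi f$ yields $\chi f=(\chi f)_S=\chi f_S$. Testing both sides against an arbitrary $\varphi\in\mathcal{D}^{\ast}(\mathbb{R}^n)$ supported in $B(0,R)$, where $\chi\equiv 1$, gives $\langle f,\varphi\rangle=\langle f_S,\varphi\rangle$. Since $R$ is arbitrary and every element of $\mathcal{D}^{\ast}(\mathbb{R}^n)$ is supported in some ball, this yields $f=f_S$ as functionals on $\mathcal{D}^{\ast}(\mathbb{R}^n)$.

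The step I expect to require the most care is verifying that the multiplication $f\mapsto\chi f$ behaves as claimed: namely that $\chi f$ genuinely lands in $\mathcal{E}^{\ast\prime}(\mathbb{R}^n)$ (so that Theorem \ref{smth1} is applicable), that multiplication by the fixed ultradifferentiable function $\chi$ is a well-defined continuous operation on these spaces under $(M.1)$ and $(M.2)'$, and most of all that $\chi f$ is \emph{rotation invariant} and that taking spherical means commutes with multiplication by the \emph{radial} $\chi$. The rotation invariance of $\chi f$ follows because for $T\in SO(n)$ one has $\langle \chi f, \varphi(T^{-1}\cdot)\rangle=\langle f,\chi\cdot\varphi(T^{-1}\cdot)\rangle=\langle f,(\chi\varphi)(T^{-1}\cdot)\rangle$ using $\chi(T^{-1}x)=\chi(x)$, and then invariance of $f$ finishes it; the spherical-mean identity $(\chi f)_S=\chi f_S$ is the dual reflection of $(\chi\varphi)_S=\chi\,\varphi_S$, which again uses only that $\chi$ is radial. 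These are all routine once the radiality of $\chi$ is exploited systematically, so no genuinely new analytic difficulty arises beyond the content already packaged in Theorem \ref{smth1}.
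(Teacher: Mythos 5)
Your proof is correct and follows essentially the same route as the paper: both reduce to Theorem \ref{smth1} by localizing $f$ with compactly supported \emph{radial} ultradifferentiable cut-offs (available by $(M.3)'$), the only cosmetic difference being that the paper organizes this as a locally finite partition of unity $f=\sum_k f_k$ with each $f_k\in{\mathcal{E}^{\ast}}'(\mathbb{R}^n)$ rotation invariant, whereas you use a single cut-off $\chi\equiv 1$ on $B(0,R)$ and let $R\to\infty$. Your verification that radiality of $\chi$ makes $\chi f$ rotation invariant and compatible with spherical means is exactly the point the paper leaves implicit.
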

\begin{proof}
Using a partition of the unity, we can write any rotation invariant $f$ as a locally finite sum $\sum_{k=1}^{\infty} f_{k}$ with each $f_{k}\in{\mathcal{E}^{\ast}}'(\mathbb{R}^{n})$ being also rotation invariant. By Theorem \ref{smth1} we have $f_{k}=(f_{k})_{S}$, and, consequently, $f_{S}=\sum_{k=1}^{\infty} (f_{k})_{S}= \sum_{k=1}^{\infty} f_{k}=f$. 
\end{proof} 

We now discuss how one can extend Theorem \ref{smth1} in the quasianalytic case (including the hyperfunction case). From now on we assume that $M_p$ satisfies $(M.0)$, $(M.1)$, $(M.2)'$, and $(QA)$. Our next considerations are in terms of sheaves of quasianalytic ultradistributions, we briefly discuss their properties following the approach from \cite{Debrouwere-VindasSQU,Hormander85} (cf. \cite{Schapira} for hyperfunctions).

Let $f \in {\mathcal{E}^\ast}'(\mathbb{R}^n)$ (referred to as a $\ast$-quasianalytic functional hereafter). A compact  $K \subseteq \mathbb{R}^n$ is called a $\ast$-\emph{carrier} of $f$ if $f \in {\mathcal{E}^\ast}'(\Omega)$ for every open neighborhood $\Omega$ of $K$. If $f \in \mathcal{A}'(\mathbb{R}^n)$, it is well-known \cite[Sect.~9.1]{Hormander} that there is a smallest compact $K \subseteq \mathbb{R}^n$ among all the $\{p!\}$-carriers of $f$, the $\{p!\}$-support of $f$ denoted by $\operatorname{supp}_{\mathcal{A}'}f$. It was noticed by H\"ormander  that a similar result basically holds for quasianalytic functionals \cite[Cor. 3.5]{Hormander85}, that is, for any $\ast$-quasianalytic functional there is a smallest $\ast$-carrier, say $\operatorname*{supp}_{{\mathcal{E}^{\ast}}'}f$,  and one has $\operatorname{supp}_{\mathcal{A}'}f=\operatorname*{supp}_{{\mathcal{E}^{\ast}}'}f$. H\"ormander only treats the Roumieu case in \cite{Hormander85}, but his proof can be modified to show the corresponding statement for the Beurling case \cite{Debrouwere-VindasSQU,Heinrich}.

Denote as ${\mathcal{E}^{\ast}}'[K]$ the space of $\ast$-quasianalytic functionals with support in $K$. One can show that there is an (up to isomorphism) unique flabby sheaf $\mathfrak{B}^{\ast}$ whose space of global sections with support in $K$ is precisely ${\mathcal{E}^{\ast}}'[K]$, for any compact of $\mathbb{R}^{n}$. We call $\mathfrak{B}^{\ast}$ the sheaf of $\ast$-quasianalytic ultradistributions. 
When $\ast=\{p!\}$, we simply write $\mathfrak{B}=\mathfrak{B}^{\ast}$, the sheaf of hyperfunctions. Actually, in the Roumieu case the existence of $\mathfrak{B}^{\ast}$ can be established exactly as for hyperfunctions with the aid of H\"{o}rmander support theorem by using the Martineau-Schapira method \cite{Schapira}. Details for the Beurling case, which require a subtler treatment, will appear in the forthcoming paper \cite{Debrouwere-VindasSQU}. Since it is important for us, we mention that on any bounded open set $\Omega$ the sections of $\mathfrak{B}^{\ast}$ are given by the quotient spaces
\begin{equation}
\label{smeq3} \mathfrak{B}^{\ast}(\Omega)={\mathcal{E}^{\ast}}'[\overline{\Omega}]/{\mathcal{E}^{\ast}}'[\partial\Omega],
\end{equation}
 which reduces to the well-known Martineau theorem in the case of hyperfunctions. Finally, we call the space of global sections $\mathfrak{B}^{\ast}(\mathbb{R}^{n})$ the space of $\ast$-quasianalytic ultradistributions on $\mathbb{R}^{n}$ (hyperfunctions if $\ast=\{p!\}$).
 
The operation of taking spherical mean preserves the space ${\mathcal{E}^{\ast}}'[K]$ if $K$ is a rotation invariant compact set. Because of (\ref{smeq3}), we can define the spherical mean $f_{S}\in\mathfrak{B}^{\ast}(\Omega)$ of $f\in\mathfrak{B}^{\ast}(\Omega)$ in a canonical manner if $\Omega$ is a bounded rotation invariant open subset of $\mathbb{R}^{n}$, namely, if $f=[g]$ with $g={\mathcal{E}^{\ast}}'[\overline{\Omega}]$, we define $f_{S}=[g_{S}]$. Using the sheaf property, one extends the definition $f_{S}\in\mathfrak{B}^{\ast}(\mathbb{R}^{n})$ for all $f\in\mathfrak{B}^{\ast}(\mathbb{R}^{n})$. We say that $f\in\mathfrak{B}^{\ast}(\mathbb{R}^{n})$ is rotation invariant if its restriction to $\Omega$ is rotation invariant for any rotation invariant bounded open set $\Omega$ (the latter makes sense because of (\ref{smeq3})). Theorem \ref{smth1} implies the following generalization:

\begin{theorem} \label{smc3} Suppose $M_p$ satisfies $(M.0)$, $(M.1)$, $(M.2)'$, and $(QA)$. A quasianalytic ultradistribution $f\in\mathfrak{B}^{\ast}(\mathbb{R}^{n})$ is rotation invariant if and only if $f=f_{S}$.
\end{theorem}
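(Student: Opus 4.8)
The plan is to prove both implications by reducing, through the sheaf structure of $\mathfrak{B}^{\ast}$, to the behaviour of genuine quasianalytic functionals on rotation invariant bounded open sets, where the identity $f=f_{S}$ follows from the averaging formula $g_{S}=\int_{SO(n)}T\cdot g\,dT$ that already underlies Theorem \ref{smth1}. First I would record the trivial direction. If $f=f_{S}$, then on every rotation invariant bounded open $\Omega$ we may write $f|_{\Omega}=[g]$ with $g\in{\mathcal{E}^{\ast}}'[\overline{\Omega}]$, so that $f|_{\Omega}=f_{S}|_{\Omega}=[g_{S}]$. Since $g_{S}$ is an honestly rotation invariant functional on $\mathbb{R}^{n}$ (Haar invariance gives $T\cdot g_{S}=g_{S}$ exactly), its class is fixed by $SO(n)$ in the quotient (\ref{smeq3}); as the rotation invariant bounded sets cover $\mathbb{R}^{n}$, the sheaf separation property yields that $f=f_{S}$ is rotation invariant. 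So only the converse requires real work.

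For the converse, both $f$ and $f_{S}$ lie in $\mathfrak{B}^{\ast}(\mathbb{R}^{n})$, and by the separation property of the sheaf it suffices to check $f|_{\Omega}=f_{S}|_{\Omega}$ for $\Omega$ ranging over an open cover; I would take the cover by rotation invariant bounded open sets, e.g.\ the balls $B(0,R)$. Fix such an $\Omega$ and write $f|_{\Omega}=[g]$ with $g\in{\mathcal{E}^{\ast}}'[\overline{\Omega}]$, using (\ref{smeq3}). Since $\overline{\Omega}$ and $\partial\Omega$ are rotation invariant compacta, each $T\in SO(n)$ acts on ${\mathcal{E}^{\ast}}'[\overline{\Omega}]$ and on ${\mathcal{E}^{\ast}}'[\partial\Omega]$, hence on $\mathfrak{B}^{\ast}(\Omega)$ with $T\cdot[g]=[T\cdot g]$. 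The hypothesis that $f|_{\Omega}$ is rotation invariant then reads $T\cdot g-g\in{\mathcal{E}^{\ast}}'[\partial\Omega]$ for every $T\in SO(n)$.

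The heart of the proof is to average this relation over the group. As in the computation preceding Theorem \ref{smth1}, the spherical mean is the $SO(n)$-average, $g_{S}=\int_{SO(n)}T\cdot g\,dT$, so that $g_{S}-g=\int_{SO(n)}(T\cdot g-g)\,dT$, and because $\overline{\Omega}$ is rotation invariant one has $g_{S}\in{\mathcal{E}^{\ast}}'[\overline{\Omega}]$ with $f_{S}|_{\Omega}=[g_{S}]$. I would then argue that, since each integrand $T\cdot g-g$ lies in the closed $SO(n)$-invariant subspace ${\mathcal{E}^{\ast}}'[\partial\Omega]$, so does the average: approximate the Haar integral by Riemann sums $\sum_{i}\lambda_{i}(T_{i}\cdot g-g)$, which are finite linear combinations of elements of ${\mathcal{E}^{\ast}}'[\partial\Omega]$ and hence belong to that subspace, and pass to the limit. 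Testing against a fixed $\varphi\in\mathcal{E}^{\ast}(\mathbb{R}^{n})$, the continuity of $T\mapsto\langle g,\varphi(T\,\cdot)\rangle$ on the compact group $SO(n)$ gives convergence of these sums to $g_{S}-g$ in the weak topology $\sigma({\mathcal{E}^{\ast}}',\mathcal{E}^{\ast})$, and since a strongly closed convex set is closed in this topology, we conclude $g_{S}-g\in{\mathcal{E}^{\ast}}'[\partial\Omega]$. Thus $f_{S}|_{\Omega}=[g_{S}]=[g]=f|_{\Omega}$, and gluing over the cover gives $f=f_{S}$.

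The step I expect to be the main obstacle is exactly this passage to the average inside the quotient, and it is precisely where quasianalyticity bites: unlike in the non-quasianalytic setting, membership in ${\mathcal{E}^{\ast}}'[\partial\Omega]$ cannot be detected by testing against functions supported away from $\partial\Omega$, since no such nontrivial test functions exist. One must therefore lean on the abstract properties of the H\"ormander--Martineau--Schapira construction of $\mathfrak{B}^{\ast}$: that ${\mathcal{E}^{\ast}}'[\partial\Omega]$ is a genuinely closed, $SO(n)$-stable subspace, and that the group average of a continuous, subspace-valued map over the compact $SO(n)$ stays in that closed subspace. Making the vector-valued integral (or the weak Riemann-sum limit) rigorous in the strong duals ${\mathcal{E}^{\ast}}'$ --- which are of $(DF)$- or Fr\'echet type according to the case, and where one uses reflexivity of $\mathcal{E}^{\ast}$ so that $\sigma({\mathcal{E}^{\ast}}',\mathcal{E}^{\ast})$-closedness of convex sets coincides with strong closedness --- and checking that it reproduces $g_{S}$ is the one point needing genuine care; everything else is bookkeeping with the sheaf axioms.
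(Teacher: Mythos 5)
Your overall strategy---localize to rotation invariant balls via the sheaf axioms, represent $f|_{\Omega}$ by $g\in{\mathcal{E}^{\ast}}'[\overline{\Omega}]$ through (\ref{smeq3}), read rotation invariance as $T\cdot g-g\in{\mathcal{E}^{\ast}}'[\partial\Omega]$, and average over $SO(n)$---is the natural route, and the paper itself offers nothing more explicit than ``Theorem \ref{smth1} implies'' this statement, so there is no written proof to match against. The easy direction and all the bookkeeping (the action on the quotient, $f_S|_{\Omega}=[g_S]$, gluing over a cover) are fine. But the step you yourself single out, namely that $g_S-g=\int_{SO(n)}(T\cdot g-g)\,dT$ lies in ${\mathcal{E}^{\ast}}'[\partial\Omega]$, is a genuine gap as written, and your proposed justification does not close it. The argument ``Riemann sums lie in the subspace, the subspace is convex and strongly closed, hence weakly closed, pass to the weak limit'' rests entirely on the unproven assertion that ${\mathcal{E}^{\ast}}'[\partial\Omega]$ is strongly closed in ${\mathcal{E}^{\ast}}'(\mathbb{R}^{n})$. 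In the quasianalytic setting membership in ${\mathcal{E}^{\ast}}'[K]$ is a \emph{carrier} condition: $h$ must extend continuously to $\mathcal{E}^{\ast}(V)$ for every neighborhood $V$ of $K$. For each fixed $V$ this describes a countable union of equicontinuous (weak-$\ast$ compact) sets, not an intersection of kernels of evaluations, and such a set is not weakly closed without a uniform bound on the extension norms. This is exactly where quasianalyticity bites, so citing it as ``an abstract property of the H\"ormander--Martineau--Schapira construction'' begs the question.

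To see why the difficulty is real, contrast it with the reason $g_S\in{\mathcal{E}^{\ast}}'[\overline{\Omega}]$ is unproblematic: for a rotation invariant neighborhood $V$ of $\overline{\Omega}$ the operator $\varphi\mapsto\varphi_S$ acts continuously on $\mathcal{E}^{\ast}(V)$, so $\langle g,\varphi_S\rangle$ gives a \emph{canonical} extension of $g_S$ to $\mathcal{E}^{\ast}(V)$. No such formula is available for $T\cdot g-g$ on a shell $V=\{R-\epsilon<|x|<R+\epsilon\}$ around $\partial\Omega$: its extension to $\mathcal{E}^{\ast}(V)$ exists only abstractly (via the support theorem and Hahn--Banach), is non-canonical, and its norm is not obviously bounded uniformly in $T\in SO(n)$. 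What would actually close the gap is precisely such a uniform carrier estimate, $\sup_{T\in SO(n)}\|T\cdot g-g\|_{{\mathcal{E}^{\ast}}'(V)}<\infty$ for every shell $V\supseteq\partial\Omega$ (equivalently, equicontinuity of $\{T\cdot g-g\}_{T}$ on $\mathcal{E}^{\ast}(V)$, e.g.\ via continuity of $T\mapsto T\cdot g-g$ into ${\mathcal{E}^{\ast}}'(V)$ and compactness of $SO(n)$); then the Riemann sums stay in a weak-$\ast$ compact subset of ${\mathcal{E}^{\ast}}'(V)$ for each $V$ and the limit is carried by $\partial\Omega$ by Alaoglu--Bourbaki. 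Either supply that estimate or find a different reduction; note also that your argument, if completed, never invokes Theorem \ref{smth1} at all, whereas the paper presents Theorem \ref{smc3} as its consequence---a further hint that the intended deduction routes the local statement back through the functional case rather than through an unproved closedness property of ${\mathcal{E}^{\ast}}'[\partial\Omega]$.
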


We point out that Theorem \ref{smc3} extends \cite[Thm.~5.7]{Chung}, which was obtained for hyperfunctions.

\end{document}